\setlist[itemize]{leftmargin=*}
\theoremstyle{plain}
\newtheorem{theorem}{Theorem}[section]
\newtheorem{corollary}[theorem]{Corollary}
\newtheorem{definition}[theorem]{Definition}
\newtheorem{lemma}[theorem]{Lemma}
\newtheorem{proposition}[theorem]{Proposition}
\theoremstyle{definition}
\theoremstyle{remark}
\newtheorem*{remark}{Remark}
\numberwithin{equation}{section}
\newcommand\numberthis{\stepcounter{equation}\tag{\theequation}}
\newcommand{\e}{{\rm e}}
\newcommand{\df}{\mathop{}\!\mathrm{d}}
\newcommand{\eps}{{\varepsilon}}
\newcommand{\C}{{\mathbb C}}
\newcommand{\N}{{\mathbb N}}
\newcommand{\Q}{{\mathbb Q}}
\newcommand{\R}{{\mathbb R}}
\newcommand{\1}{{\mathbf 1}}
\newcommand{\CC}{{\mathcal C}}
\newcommand{\vphi}{{\bm \phi}}
\newcommand{\vt}{{\bm t}}
\newcommand{\cE}{{\mathcal E}}
\newcommand{\I}{{\mathcal I}}
\newcommand{\cV}{{\mathcal V}}
\DeclareMathOperator{\G}{{\mathcal G}}
\DeclareMathOperator{\Res}{Res}
\DeclareMathOperator{\sgn}{sgn}
\newcommand{\abs}[1]{{\left| {#1} \right|}}
\newcommand{\norm}[1]{{\left\| {#1} \right\|}}
\newcommand{\floor}[1]{{\left\lfloor {#1} \right\rfloor}}
\renewcommand\Re{\operatorname{Re}}
\renewcommand\Im{\operatorname{Im}}
\newcommand{\nvt}{\norm{\vt}}
\numberwithin{equation}{section}
\title[Oscillatory integrals related to infinitely divisible distributions]{Effective estimation of some oscillatory integrals related to infinitely divisible distributions}
\date{\today}
\author{S. Bettin}
\address{SB: Dipartimento di Matematica, Universit\`a di Genova, via Dodecaneso 35, 16146 Genova, Italy}
\email{bettin@dima.unige.it}
\author{S. Drappeau}
\address{SD: Aix Marseille Universit\'e, CNRS, Centrale Marseille, I2M UMR 7373, 13453 Marseille, France}
\email{sary-aurelien.drappeau@univ-amu.fr}
\begin{document}

\begin{abstract}
  We present a practical framework to prove, in a simple way, two-terms asymptotic expansions for Fourier integrals
  $$ \I(t) = \int_\R(\e^{it\phi(x)}-1) \df \mu(x) $$
  where~$\mu$ is a probability measure on~$\R$ and~$\phi$ is measurable. This applies to many basic cases, in link with Levy's continuity theorem. We present applications to limit laws related to rational continued fractions coefficients.
\end{abstract}

\subjclass[2010]{41A60 (Primary);  60E07, 60E10 (Secondary)}

\keywords{Fourier integral, characteristic function, infinitely divisible distribution, asymptotic expansion, limit law}

\thanks{The authors thank the referee for his or her remarks and a careful reading of the paper. This paper was partially written during a visit of of S. Bettin at the Aix-Marseille University and a visit of S. Drappeau at the University of Genova. The authors thank both Institution for the hospitality and Aix-Marseille University and INdAM for the financial support for these visits. S. Bettin is member of the INdAM group GNAMPA and his work is partially supported by PRIN 2017 ``Geometric, algebraic and analytic methods in arithmetic''.}

\maketitle

\section{Introduction}

Let~$\mu$ be a probability measure on~$\R$, and~$\phi:\R \to \R$ be~$\mu$-measurable. The present paper is concerned with asymptotic formulæ for the Fourier integrals associated with~$\phi$ near the origin,
\begin{equation}
  \I[\phi](t) := \int(\e^{it\phi(x)}-1) \df \mu(x), \quad (t\to 0). \label{eq:integral-question-Iphi}
\end{equation}
Such estimates are connected with the question of whether the push-forward measure~$\phi_*(\mu)$ belongs to the bassin of attraction of a stable law, see Chapter~2 of~\cite{IbragimovLinnik1971}.
Our interest in this question originates from this point of view, and more specifically from the work~\cite{BettinDrappeau} where we study the convergence towards stable laws of the value distribution of invariants related to modular forms. In the setting of~\cite{BettinDrappeau}, the measure~$\mu$ is the Gauss-Kuzmin distribution
$$ \df \mu(x) = \frac{\df x}{(1+x)\log 2} \qquad (x\in[0, 1]), $$
and this measure is invariant under the Gauss map~$T(x) = \{1/x\}$, where~$\{x\} = x - \floor{x}$ is the fractional part of~$x$. More precisely, in~\cite{BettinDrappeau}, we are interested in Birkhoff sums
\begin{equation}
  \sum_{j=1}^r \phi(T^r(x)), \qquad (T^r = T \circ \dotsb \circ T),\label{eq:rational-birkhoff}
\end{equation}
where~$x$ varies among rationals and~$r\geq 0$ is the length of the continued fractions expansion of~$x$. In the set of rationals we consider, these sums are found to typically behave as sums of the shape
$$ \sum_{j=1}^r \phi(X_r) $$
where~$(X_j)_{1\leq j \leq r}$ are i.i.d. random variables distributed according to the Gauss-Kuzmin measure~$\mu$. Then effective estimates for the integral~\eqref{eq:integral-question-Iphi}, in conjunction with~\cite[Theorem~3.1]{BettinDrappeau} and the Berry-Esseen inequality~\cite[equation~(XVI.3.13)]{Feller1971} are used to obtain uniform limit theorems for the rational Birkhoff sums~\eqref{eq:rational-birkhoff}.

We return to the setting where~$\mu$ is an arbitrary probability measure on~$\R$.
Integrals~\eqref{eq:integral-question-Iphi} are related to the methods of asymptotic analysis mentioned \emph{e.g.} in Chapter~9 of the monograph~\cite{Olver1997}. When expressed as convolution integrals~$\int_x h(tx) f(x) \df x$, they are refered to as $h$-transforms in~\cite{BleisteinHandelsman1986}, and are also the topic of interest of the recent work~\cite{Lopez2008}. The variety in assumptions and methods seems to prevent us from having a uniform framework for estimating~\eqref{eq:integral-question-Iphi}.

The goal of the present paper is to present and prove several basic estimates through which one can give a streamlined and simple proof of an effective asymptotic expansion of the integral~\eqref{eq:integral-question-Iphi}, including the terms of interest in central limit theorems.

\begin{definition}
  Given~$\alpha\in(0, 3]$ and two positive functions~$L, R$ defined in a neighborhood of~$0$ in~$\R_+^*$, we denote by~$\G(\alpha, L, R)$ the set of functions~$\phi:\R\to\R$ such that for some numbers~$c_1, c_2 \in \R$ and~$c_* \in \C$, and all small enough~$t>0$, there holds
  \begin{equation}
    \I[\phi](t) = i c_1 t + c_2 t^2 + c_* t^\alpha L(t) + O(t^3 + t^\alpha R(t)).\label{eq:expansion-Iphi}
  \end{equation}
\end{definition}
\begin{remark}
  \begin{itemize}
  \item If $R=O(t^\eps)$ for any $\eps>0$ and~$\alpha<1$, the term~$c_1t$ in~\eqref{eq:expansion-Iphi} is part of the error term, and likewise for~$c_2 t^2$ if~$\alpha<2$.
  \item We will be interested in the largest one or two terms in the expansion~\eqref{eq:expansion-Iphi}. The case~$\alpha=3$, $L = R \equiv 1$ corresponds to an order~$2$ Taylor expansion.
  \item Whenever the expansion~\eqref{eq:expansion-Iphi} holds for~$\phi$, we will denote the coefficients by~$c_1(\phi)$, $c_2(\phi)$, $c_\ast(\phi)$ respectively.
  \end{itemize}
\end{remark}

\begin{theorem}
  \begin{enumerate}
  \item If~$\int \abs{\phi(x)}^\alpha \df\mu(x) < \infty$ for some~$\alpha \in (0, 3]$, then~$\phi \in \G(\alpha, 1, 1)$.
  \item Suppose that~$\df \mu = f \df \nu$ where~$\nu$ is the Lebesgue measure and~$f\in\CC^1([0, 1])$. Then for all~$a\in \R^*$,~$\beta>3$ and~$\lambda\geq 0$, the function
    $$ \phi : (0, 1]\to \R, \qquad \phi(x) = a x^{-\beta} \abs{\log x}^\lambda, $$
    belongs to~$\G(\frac1\beta, \abs{\log}^{\lambda/\beta+v}, \abs{\log}^{\lambda/\beta+v-1+\eps})$ for any~$\eps \in (0, 1]$, where~$v=1$ for~$\beta \in \{1/2, 1\}$ and~$v=0$ otherwise.
  \item Given two measurable functions~$\phi_1, \phi_2$, such that~$\phi_j \in \G(\alpha_j, L_j, R_j)$ with~$t^{\alpha_2} L_2(t) = O(t^{\alpha_1}L_1(t))$ as~$t\to 0$, then~$\phi_1+\phi_2 \in \G(\alpha_1, L_1, R_+)$ for some positive function~$R_+$ explicit in terms of~$L_1, L_2$ and~$R_1$.
  \end{enumerate}
\end{theorem}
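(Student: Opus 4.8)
The three parts require different techniques, so I would organize the proof as three separate arguments, with part (1) being the analytic heart of the matter and parts (2), (3) building on it.

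For part (1), the plan is to split the integral $\I[\phi](t) = \int(\e^{it\phi(x)}-1)\df\mu(x)$ according to whether $t|\phi(x)|$ is small or large, say at the threshold $|\phi(x)| \leq 1/t$ versus $|\phi(x)| > 1/t$. On the small-argument range one uses the Taylor approximation $\e^{iu}-1 = iu - \tfrac12 u^2 + O(\min(|u|^2, |u|^3))$; the remainder integrates against $\df\mu$ to $O(t^3 \int |\phi|^3 \mathbf 1_{|\phi|\leq 1/t})$ when $\alpha \geq 2$, and more carefully one interpolates to get $O(t^\alpha \int |\phi|^\alpha)$-type bounds using $\min(|u|^2,|u|^3) \leq |u|^\alpha$ for $u$ in the relevant range. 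On the large-argument range $|\phi| > 1/t$ one bounds $|\e^{it\phi}-1| \leq 2 \leq 2(t|\phi|)^\alpha$, giving $O(t^\alpha \int_{|\phi|>1/t}|\phi|^\alpha\df\mu)$. Adding the completed main terms $ic_1 t + c_2 t^2$ with $c_1 = \int\phi\,\df\mu$, $c_2 = -\tfrac12\int\phi^2\df\mu$ (when $\alpha \geq 1$, resp.\ $\alpha \geq 2$; otherwise these are absorbed into the error as the Remark notes) and the tail corrections that arise from completing those integrals, one arrives at an expansion of the form \eqref{eq:expansion-Iphi} with $c_* = 0$ and $L = R \equiv 1$. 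The slightly delicate point is the bookkeeping of which of $ic_1 t$, $c_2 t^2$ genuinely appear versus being error terms, depending on the range of $\alpha$; this is exactly what the Remark is flagging, and a clean way is to treat the three cases $\alpha \in (0,1)$, $[1,2)$, $[2,3]$ separately.

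For part (2), with $\phi(x) = a x^{-\beta}|\log x|^\lambda$ on $(0,1]$ and $\df\mu = f\,\df x$ with $f \in \CC^1$, I would first reduce to the model case $f \equiv f(0)$ plus an error: write $f(x) = f(0) + O(x)$, so that the contribution of the error term is an integral of the same type but with $\beta$ effectively reduced, hence of strictly smaller order and absorbable. For the main term $f(0)\int_0^1 (\e^{ita x^{-\beta}|\log x|^\lambda}-1)\,\df x$, the natural move is the substitution $y = x^{-\beta}$ (or directly $u = t a x^{-\beta}|\log x|^\lambda$), turning the integral into one of the form $\int_A^\infty (\e^{iu}-1) u^{-1-1/\beta} (\log u)^{\text{stuff}}\,\df u$ up to careful handling of the $|\log x|^\lambda$ factor via $\log x \sim -\tfrac1\beta \log y$. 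One then recognizes this as a (regularized) Mellin-type integral: $\int_0^\infty (\e^{iu}-1) u^{s-1}\df u = \Gamma(s)\e^{i\pi s/2}$ for $-1 < \Re s < 0$, analytically continued, evaluated near $s = -1/\beta$. This produces the term $c_* t^{1/\beta} |\log t|^{\lambda/\beta + v}$, where the shift $v$ and the logarithmic powers come from expanding $(\log(1/t))^{\lambda/\beta}$-type factors and, in the resonant cases $\beta \in \{1/2, 1\}$ where $1/\beta \in \{1, 2\}$ collides with the $ic_1 t$ or $c_2 t^2$ terms, from the pole of $\Gamma(s)$ at $s = -1, -2$ producing an extra $\log$. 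Tracking the secondary error requires Taylor-expanding the slowly varying factors one more step and bounding the remainder, yielding the stated $\abs{\log}^{\lambda/\beta + v - 1 + \eps}$.

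Part (3) is the easy one: given $\I[\phi_j](t) = ic_1(\phi_j)t + c_2(\phi_j)t^2 + c_*(\phi_j)t^{\alpha_j}L_j(t) + O(t^3 + t^{\alpha_j}R_j(t))$, one simply adds, noting that $\I[\phi_1+\phi_2](t)$ is \emph{not} $\I[\phi_1](t)+\I[\phi_2](t)$ in general — rather $\e^{it(\phi_1+\phi_2)}-1 = (\e^{it\phi_1}-1) + (\e^{it\phi_2}-1) + (\e^{it\phi_1}-1)(\e^{it\phi_2}-1)$, so there is a cross term $\int(\e^{it\phi_1}-1)(\e^{it\phi_2}-1)\df\mu$ to control. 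Using $|\e^{itu}-1| \leq \min(2, t|u|)$, one bounds this cross term by, say, $\int \min(2,t|\phi_1|)\cdot\min(2,t|\phi_2|)\df\mu$, and a Cauchy–Schwarz or direct interpolation argument (splitting on the sizes of $\phi_1,\phi_2$) shows it is of order $O(t^{\alpha_1}\tilde R(t))$ for an explicit $\tilde R$ built from $L_1, L_2, R_1$; combined with the hypothesis $t^{\alpha_2}L_2(t) = O(t^{\alpha_1}L_1(t))$, the second main term $c_*(\phi_2)t^{\alpha_2}L_2$ also gets demoted into the error, and one sets $R_+$ accordingly. The main obstacle overall is part (2): getting the logarithmic exponents and the resonant-case shifts $v$ exactly right, which is a careful but routine Mellin/residue computation rather than a conceptual difficulty.
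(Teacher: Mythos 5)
Parts (1) and (2) of your plan are essentially sound. For (1), your split at $\abs{\phi}\leq 1/t$ versus $\abs{\phi}>1/t$ is just the interpolation inequality $\abs{\e^{iu}-\sum_{0\leq k<\alpha}(iu)^k/k!}\ll\abs{u}^\alpha$ carried out by hand, so this is the same argument as the paper's Proposition~\ref{prop:intgamma-taylor}, only less compact. For (2), you take a genuinely different route: substitution plus the analytically continued oscillatory integral $\int_0^\infty(\e^{iu}-1)u^{s-1}\df u$, i.e.\ an Abelian-theorem style direct evaluation, whereas the paper inserts the inverse Mellin representation of $\e^{-y}$, computes the transform $G_\eta(s)$ in closed form as $\abs{a}^s\e^{-s\frac{\pi i}{2}(1-\eta)\sgn a}\Gamma(\lambda s+1)/(1-\beta s)^{\lambda s+1}$, uses an $\eps$-damping ($J_\pm(\eps)$) to justify convergence, and shifts to a Hankel contour around the branch point $s=1/\beta$ (quoting Tenenbaum). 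Your route can be made to work, but the hard parts (the slowly varying factor $\abs{\log x}^\lambda$ under the substitution, the regularization of the conditionally convergent integral, and the resonant cases $\beta\in\{1/2,1\}$ where $1/\beta$ collides with the poles at $s=1,2$) are only sketched; the paper's formulation as a general Proposition~\ref{prop:Dirichlet-to-G} also buys reusability (it is applied again to $\phi(x)=\floor{1/x}$ via $\zeta(2-s)$), which your ad hoc computation would not.

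The genuine gap is in part (3), in how you control the cross term $\int\delta_1\delta_2\df\mu$. You propose to bound it pointwise by $\min(2,t\abs{\phi_1})\min(2,t\abs{\phi_2})$ and then argue by ``splitting on the sizes of $\phi_1,\phi_2$'' or by Cauchy--Schwarz applied after this replacement. But the hypotheses of part (3) say nothing about the distribution or moments of the $\phi_j$ --- only that $\I[\phi_j]$ admits the expansion --- so a size-splitting argument has nothing to feed on; and after the replacement, Cauchy--Schwarz produces $\int\min(2,t\abs{\phi_j})^2\df\mu$, which is \emph{not} controlled by $\Re\I[\phi_j](t)$ (if $t\phi_j$ concentrates near $2\pi\Z$, then $\abs{\e^{it\phi_j}-1}$ is small while $\min(2,t\abs{\phi_j})$ is not). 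The paper's Lemma~\ref{lem:addition} avoids this by applying Cauchy--Schwarz directly to $\delta_1\delta_2$ and using the identity $\int\abs{\delta_j}^2\df\mu=-2\Re\I[\phi_j](t)$, so the cross term is $O\big(\prod_j\abs{\Re\I[\phi_j](t)}^{1/2}\big)$, a quantity expressible purely through the assumed expansions (the $ic_1t$ term drops on taking real parts); this is exactly what produces the $\sqrt{L_1L_2}$-type terms in $R_+$ of Proposition~\ref{prop:composition}. Your route could be repaired with a truncation inequality relating $\int\min(1,t^2\phi_j^2)\df\mu$ to averages of $1-\Re\int\e^{iu\phi_j}\df\mu$ over $u\in[0,t]$, but that is an extra ingredient absent from the proposal. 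You should also record the side conditions the paper imposes ($\alpha_j\leq 2$, $R_j=O(L_j)$, $t^2=O(t^{\alpha_1}L_1)$, $L_j,R_j=t^{o(1)}$), which are what actually allow $c_\ast(\phi_2)t^{\alpha_2}L_2$ and the $t^2$ contributions to be absorbed into $t^{\alpha_1}R_+$.
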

The three items here are special cases of Proposition~\ref{prop:intgamma-taylor}, Corollary~\ref{cor:gamma-pow-xlog} and Proposition~\ref{prop:composition} below, respectively.
The coefficients~$c_1, c_2$ and~$c_*$ and the function $R_+$ are explicitly described in the precise versions below. 

The proofs of all three result are rather short, but together they allow for a simple proof of the expansion~\eqref{eq:integral-question-Iphi} in several concrete cases:
\begin{itemize}
\item In Corollary~\ref{cor:integralapplication-estermann}, we study a function~$\phi:(0, 1]\to \R^2$ having an asymptotic behaviour around~$0$ of the shape~$x^{-1/2}\abs{\log x}$. The ensuing estimate we obtain is used in~\cite[Theorem~2.1]{BettinDrappeau} to deduce a central limit theorem for central values~$\{D(1/2, x), x \in \Q\cap(0, 1]\}$ of the analytic continuation of the Estermann function
  \begin{equation}
    D(s, x) = \sum_{n \geq 1}\frac{\tau(n)}{n^s}\e^{2\pi i n x}, \qquad (\Re(s)>1),\label{eq:def-estermann}
  \end{equation}
  where~$\tau$ is the divisor function.
\item In Corollaries~\ref{cor:integralapplication-floor1x} and \ref{cor:integralapplication-largemom}, we study the functions of the shape~$\phi(x) = \floor{1/x}^\lambda$ where~$\lambda\geq 1/2$. These functions occur when studying the values~$\{\Sigma_\lambda(x), x \in \Q\cap(0, 1]\}$ of the moments of the continued fractions coefficients,
  $$ \Sigma_\lambda(x) = \sum_{j=1}^r a_j^\lambda, \qquad (x = [0; a_1, \dotsc, a_r] = \frac1{a_1 + \frac1{a_2+\dotsb}}, a_r>1), $$
  see~\cite[Theorems~2.5 and~9.4]{BettinDrappeau}. This, in turn, is applied to obtain a law of large numbers for the values of the Kashaev invariants of the~$4_1$ knot~\cite[Corollary~2.6]{BettinDrappeau}.
\item In Corollary~\ref{cor:integralapplication-dedekind}, we study the function~$\phi$ on~$(0, 1]$ given by~$\phi(x) = \floor{1/x} - \floor{1/T(x)}$, where~$T:(0, 1] \to (0, 1]$, $T(x) = \{1/x\}$ is the Gauss map. The estimate we obtain is used in~\cite[Theorem~2.7]{BettinDrappeau} to obtain an independent proof, using dynamical systems, of a theorem of Vardi~\cite{Vardi1993} on the convergence to a Cauchy law of the values of Dedekind sums.
\end{itemize}

\section{Estimation of~\eqref{eq:integral-question-Iphi} in general}

\subsection{Basic estimates}

\subsubsection{Taylor estimate}

The first and simplest method to obtain an estimate for~\eqref{eq:integral-question-Iphi} is to insert and integrate a Taylor expansion for the exponential.

\begin{proposition}\label{prop:intgamma-taylor}
  Assume that for some~$\alpha\in(0, 3]$, we have
  $$ K := \int \abs{\phi(x)}^\alpha \df\mu(x) < \infty. $$
  Then~$\phi\in\G(\alpha, 1, 1)$, and more precisely
  \begin{equation}
    \I[\phi](t) = i c_1 t + c_2 t^2 + O(K t^\alpha)\label{eq:Taylor-effective}
  \end{equation}
  with~$c_1 = \int \phi \df\mu$ if~$\alpha\geq1$, and~$c_2 = -\frac12\int \abs{\phi}^2 \df\mu$ if~$\alpha\geq2$. The implied constant is absolute.
\end{proposition}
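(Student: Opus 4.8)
The plan is to substitute into $\I[\phi](t)$ the Taylor polynomial of $u\mapsto\e^{iu}$ of the appropriate degree and to control the remainder by the classical inequality
\[ \Bigl\lvert \e^{iu} - \sum_{0\le j\le k}\frac{(iu)^j}{j!}\Bigr\rvert \le \min\Bigl(\frac{\abs u^{k+1}}{(k+1)!},\ \frac{2\abs u^{k}}{k!}\Bigr), \qquad u\in\R,\ k\ge 0\ \text{integer}, \]
which for $k\ge 1$ follows from the identity $\e^{iu}-\sum_{0\le j\le k}(iu)^j/j! = \tfrac{i^{k}}{(k-1)!}\int_0^u(u-s)^{k-1}(\e^{is}-1)\df s$ upon bounding $\abs{\e^{is}-1}$ either by $\abs s$ or by $2$, and for $k=0$ is just $\abs{\e^{iu}-1}\le\min(\abs u,2)$. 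I would set $k:=\min(\floor\alpha,2)\in\{0,1,2\}$, so that $0\le\alpha-k\le 1$.

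First I would record that, $\mu$ being a probability measure, Hölder's (equivalently Jensen's) inequality gives $\int\abs\phi^j\df\mu\le K^{j/\alpha}<\infty$ for $0\le j\le\alpha$; in particular $\int\phi^j\df\mu$ is well defined for $j\le k$. Plugging $u=t\phi(x)$ into the inequality above with this $k$ and integrating against $\df\mu$, I would split
\[ \I[\phi](t) = \sum_{1\le j\le k}\frac{(it)^j}{j!}\int\phi^j\df\mu \;+\; \int\Bigl(\e^{it\phi(x)}-\sum_{0\le j\le k}\frac{(it\phi(x))^j}{j!}\Bigr)\df\mu(x). \]
The finite sum on the right contributes $i c_1 t$ with $c_1=\int\phi\df\mu$ (present exactly when $k\ge1$, i.e. $\alpha\ge1$) and $c_2 t^2$ with $c_2=-\tfrac12\int\phi^2\df\mu$ (present exactly when $k\ge2$, i.e. $\alpha\ge2$), matching the statement; when $\alpha<1$ this sum is empty, and when $\alpha<2$ it contains at most the $j=1$ term (so in particular one does not need $\int\phi^2\df\mu$ to be finite there).

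For the remainder I would use $\min(A,B)\le A^{\alpha-k}B^{1-(\alpha-k)}$, which bounds the integrand by
\[ \Bigl(\tfrac{\abs{t\phi(x)}^{k+1}}{(k+1)!}\Bigr)^{\alpha-k}\Bigl(\tfrac{2\abs{t\phi(x)}^{k}}{k!}\Bigr)^{1-(\alpha-k)} \le 2\,t^\alpha\abs{\phi(x)}^\alpha, \]
since $\abs{t\phi(x)}$ here appears to the power $(k+1)(\alpha-k)+k(1-(\alpha-k))=\alpha$ and the numerical factor $((k+1)!)^{-(\alpha-k)}(2/k!)^{1-(\alpha-k)}$ is at most $2$ for each $k\in\{0,1,2\}$. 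Integrating against $\df\mu$ gives the remainder $O(Kt^\alpha)$ with an absolute implied constant, which is~\eqref{eq:Taylor-effective}; membership $\phi\in\G(\alpha,1,1)$ then follows by taking $c_*=0$ (and $c_1=c_2=0$ in the ranges where the statement does not prescribe them) and noting that $O(Kt^\alpha)=O(t^3+t^\alpha)$ for small $t>0$, as $\alpha\le3$ and $K$ is fixed. I do not expect a genuine obstacle: the only points deserving care are the elementary bound for the truncated exponential with the interpolation exponent $\theta=\alpha-k$, and the (routine) check that $\int\phi^j\df\mu$ is finite for $j\le k$ so that the splitting above is legitimate.
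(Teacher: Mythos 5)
Your proof is correct and follows essentially the same route as the paper: insert a Taylor polynomial of the exponential, bound the remainder by an absolute constant times $\abs{u}^\alpha$ (which you justify via the integral form of the remainder and the interpolation $\min(A,B)\le A^{\alpha-k}B^{1-(\alpha-k)}$), and integrate against $\mu$. The only cosmetic difference is that at integer $\alpha$ you include the degree-$\floor\alpha$ term while the paper truncates at $k<\alpha$; both choices yield the stated conclusion since that term is itself $O(Kt^\alpha)$.
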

\begin{proof}
  We use the bound~$\abs{\e^{iu} - \sum_{0\leq k < \alpha} \frac{(iu)^k}{k!}} \ll \abs{u}^\alpha$ with~$u = t\phi(x)$, and integrate over~$x$.
\end{proof}
Although it will not be useful for us here, we note that in the precise bound~\eqref{eq:Taylor-effective}, the value of~$\alpha$ could be taken as a function of~$t$. For example, if~$\mu$ is the Lebesgue measure on~$(0, 1)$ and~$\phi(x) = 1/x$, we can take~$\alpha = 1-1/\abs{\log t}$ and obtain~$\I[\phi](t) = O(t\abs{\log t})$.

\subsubsection{Using properties of the Mellin transform}

When the moment~$\int\abs{\phi}^\alpha \df \mu$ diverges at some particular~$\alpha$, we can often extract a useful expansion from the Cauchy formula and the polar behaviour of the Mellin transform. For~$x\in \R$, $s\in \C$ and~$\eta\in[0,1]$, let
$$ \phi_{s,\eta}(x) := \1_{\phi(x)\neq 0}\abs{\phi(x)}^s \exp(-s \tfrac {\pi i}2(1-\eta)\sgn\phi(x)), \qquad \phi_s(x) := \phi_{s,0}(x). $$
Note that for~$k\in\N_{>0}$,~$\phi_k(x) = (-i\phi(x))^k$. Define further
$$ G_\eta(s) := \int \phi_{s,\eta}(x) \df\mu(x). $$

\begin{proposition}\label{prop:Dirichlet-to-G}
  Let~$\alpha\in(0,3)$, $\rho\in(0, 1)$, $\delta, \eta_0> 0$ and~$\xi\in\R$. Assume that for some~$c>0$, we have
  \begin{equation}
    \int_{\phi(x)\neq 0} (\abs{\phi(x)}^c + \abs{\phi(x)}^{-c})\df\mu(x) < \infty \label{eq:gammaG-prebound}
  \end{equation}
  and that the functions~$G_\eta(s)$ for~$\eta\in[0,\eta_0]$, initially defined for~$\Re(s) \in (-c, c)$, can be analytically continued to the set
  $$ \big\{s\in\C, 0<\Re(s)\leq\alpha+\delta, s\not\in[\alpha, \alpha+\delta]\big\}. $$
  Assume further that
  \begin{equation*}
    \sup_{0\leq \eta \leq \eta_0}\int_{\substack{\tau\in\R \\ s = \alpha + \delta + i\tau}} \abs{\Gamma(-s) G_\eta(s)} \df \tau < \infty, 
  \end{equation*}
  and that there is an open neighborhood~$\cV$ of~$[\alpha, \alpha+\delta]$ for which
  \begin{equation}
    (\alpha-s)^\xi G_0(s) = \varrho + O(\abs{s-\alpha}^\rho), \qquad s\in\cV\smallsetminus[\alpha, \alpha+\delta],\ \Re(s)\leq \alpha+\delta.\label{eq:polar-bhvr}
  \end{equation}
  Then, $\phi \in \G(\alpha, \abs{\log}^{\xi-1+\upsilon_\alpha}, \abs{\log}^{\xi-1+\upsilon_\alpha-\rho})$, where $\upsilon_\alpha=1$ if $\alpha=1,2$ and $\upsilon_\alpha=0$ otherwise, and with coefficients given by
\begin{equation}\label{cfc}
     c_1 = iG_0(1) \text{ if } \alpha>1, \qquad c_2 = \tfrac12 G_0(2) \text{ if } \alpha>2, \qquad c_\ast =
    \begin{cases}
	 -\varrho/\Gamma(\xi+1), & \alpha=1,\\
	  \frac12 \varrho/\Gamma(\xi+1), & \alpha=2,\\
     \varrho \frac{\Gamma(-\alpha)}{\Gamma(\xi)}, & \alpha\notin\{1,2\}.\\
     \end{cases}
\end{equation}
\end{proposition}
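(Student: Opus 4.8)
The starting point is the Mellin–Barnes representation of $\e^{iu}-1$. For $u\in\R\setminus\{0\}$ and a vertical line $\Re(s)=\sigma$ with $\sigma\in(0,1)$ one has the classical formula
$$
\e^{iu}-1 = \frac{1}{2\pi i}\int_{(\sigma)} \Gamma(-s)\, (-iu)^s\, \df s,
$$
where $(-iu)^s = |u|^s\exp\!\big(-s\tfrac{\pi i}{2}\sgn u\big)$ is the principal branch; this is exactly the definition of $\phi_s$ with $u=\phi(x)$. (The contour can be taken anywhere with $0<\Re(s)<1$ because the only poles of $\Gamma(-s)$ in $\Re(s)>0$ are at the positive integers, and the pole at $s=0$ is cancelled by the $-1$.) The plan is: write $\I[\phi](t)=\int(\e^{it\phi(x)}-1)\df\mu(x)$, substitute this representation with $u=t\phi(x)$, and interchange the $x$-integral with the $s$-integral. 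The interchange is justified on a line $\Re(s)=\sigma$ with $\sigma\in(0,\min(c,1))$ using hypothesis~\eqref{eq:gammaG-prebound}, the fast decay of $\Gamma(-s)$ on vertical lines, and the fact that $\int |\phi|^\sigma\df\mu<\infty$ there; one also needs $\sigma$ small enough that $|\phi|^{\sigma}$ and $|\phi|^{-\sigma}$ are both integrable, to control the contribution near $\phi(x)=0$ where $|-iu|^s$ is small but the measure may be large. This yields
$$
\I[\phi](t) = \frac{1}{2\pi i}\int_{(\sigma)} \Gamma(-s)\, G_0(s)\, t^s\, \df s .
$$

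Next I would shift the contour from $\Re(s)=\sigma$ to $\Re(s)=\alpha+\delta$. Between these lines $\Gamma(-s)$ has simple poles at $s=1$ (present iff $\alpha>1$) and $s=2$ (present iff $\alpha>2$), with residues producing $i\,G_0(1)\,t$ and $\tfrac12 G_0(2)\,t^2$ respectively — this is where $c_1,c_2$ in~\eqref{cfc} come from, using $\Res_{s=k}\Gamma(-s)=(-1)^{k+1}/k!$ and $\phi_k=(-i\phi)^k$. The horizontal pieces of the contour vanish in the limit by the growth hypotheses on $G_\eta$ together with the decay of $\Gamma(-s)$; here the family $G_\eta$ for $\eta\in[0,\eta_0]$ is used to deform the branch-cut direction so that the truncated contours stay in the region of analyticity (this is the role of the parameter $\eta$, rotating the argument of $\phi_{s,\eta}$ away from the cut $[\alpha,\alpha+\delta]$). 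The remaining integral on $\Re(s)=\alpha+\delta$ is $O(t^{\alpha+\delta})=O(t^3+t^\alpha R(t))$ by the assumed uniform integrability of $\Gamma(-s)G_\eta(s)$ there. What is left is the local contribution of the singularity of $G_0$ at $s=\alpha$.

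The heart of the matter is extracting the main term from the singularity along $[\alpha,\alpha+\delta]$. Using~\eqref{eq:polar-bhvr}, near $s=\alpha$ we have $G_0(s)=\varrho\,(\alpha-s)^{-\xi}+O(|s-\alpha|^{\rho-\xi})$, so the local contour integral of $\Gamma(-s)G_0(s)t^s$ around the cut is, to main order, $\varrho\,\Gamma(-\alpha)\,t^\alpha$ times the standard Hankel-type integral
$$
\frac{1}{2\pi i}\int_{\mathcal H}(\alpha-s)^{-\xi} t^{s-\alpha}\,\df s,
$$
which evaluates (by Hankel's formula for $1/\Gamma$, after the substitution $w=(s-\alpha)\log(1/t)$) to $\frac{(\log(1/t))^{\xi-1}}{\Gamma(\xi)}$ when $\xi\notin\{0,-1,-2,\dots\}$; the error term $O(|s-\alpha|^{\rho-\xi})$ contributes $O\big(t^\alpha(\log 1/t)^{\xi-1-\rho}\big)$, matching the claimed $R$. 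This produces $c_\ast = \varrho\,\Gamma(-\alpha)/\Gamma(\xi)$ for $\alpha\notin\{1,2\}$. The cases $\alpha=1$ and $\alpha=2$ are special because $\Gamma(-s)$ itself has a pole at $s=\alpha$, so the product $\Gamma(-s)(\alpha-s)^{-\xi}$ has a singularity of a different type; there $\Gamma(-s)\sim (-1)^{\alpha+1}/(\alpha!\,(\alpha-s))$, the Hankel integral becomes $\frac{1}{2\pi i}\int_{\mathcal H}(\alpha-s)^{-\xi-1}t^{s-\alpha}\df s=\frac{(\log 1/t)^{\xi}}{\Gamma(\xi+1)}$, and an extra factor of $(\log 1/t)$ appears — this is the source of $\upsilon_\alpha=1$ and of the modified constants $\mp\varrho/\Gamma(\xi+1)$ in~\eqref{cfc} (the sign coming from $(-1)^{\alpha+1}/\alpha!$, i.e. $-1$ for $\alpha=1$ and $+1/2$ for $\alpha=2$).

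The main obstacle I anticipate is the rigorous bookkeeping of the contour deformation near the cut: one must choose a concrete keyhole/Hankel contour that (i) stays within the neighborhood $\cV$ on which the expansion~\eqref{eq:polar-bhvr} is valid, (ii) remains in the domain of analyticity of the $G_\eta$, using the $\eta$-rotation of the branch to pass to either side of $[\alpha,\alpha+\delta]$, and (iii) lets one pass to the limit as the contour collapses onto the cut, with all error terms uniform in $t$. Handling the three regimes $\alpha\notin\{1,2\}$, $\alpha=1$, $\alpha=2$ in one stroke, and tracking how the $\Gamma(-s)$-pole interacts with the algebraic singularity $(\alpha-s)^{-\xi}$, is the delicate part; everything else (the Mellin inversion, the interchange of integrals, the residue computation at $s=1,2$, the tail bound on $\Re(s)=\alpha+\delta$) is routine given the hypotheses.
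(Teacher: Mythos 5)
Your overall architecture is the same as the paper's: a Mellin--Barnes representation of the exponential, interchange of integrals to get $\frac1{2\pi i}\int\Gamma(-s)G_0(s)t^s\df s$, a shift to $\Re(s)=\alpha+\delta$ collecting the residues at $s=1,2$ (giving $c_1,c_2$), the tail bound on the shifted line from the integrability hypothesis, and a standard Hankel singularity analysis at $s=\alpha$ with the three regimes; your constants, including the extra factor $\abs{\log t}$ when $\alpha\in\{1,2\}$, agree with \eqref{cfc}. (Small slip: near $s=\alpha\in\{1,2\}$ one has $\Gamma(-s)\sim\frac{(-1)^{\alpha}}{\alpha!\,(\alpha-s)}$, not $(-1)^{\alpha+1}$; the values $-1$ and $+\tfrac12$ that you actually use afterwards are the correct ones.)

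The genuine gap is the convergence/Fubini step, and with it the role of the family $G_\eta$. For purely imaginary exponent the ``fast decay of $\Gamma(-s)$'' is illusory: on $\Re(s)=\sigma$ one has $\abs{\Gamma(-s)}\asymp\abs{\tau}^{-\sigma-1/2}\e^{-\pi\abs{\tau}/2}$, while $\abs{(-it\phi(x))^s}=\abs{t\phi(x)}^{\sigma}\e^{\frac\pi2\tau\sgn\phi(x)}$, so on the half-line $\tau\sgn\phi(x)>0$ the exponentials cancel and the integrand decays only like $\abs{\tau}^{-\sigma-1/2}$. Hence the $s$-integral is not absolutely convergent for $\sigma\le\tfrac12$, and the joint absolute convergence needed for your interchange forces $\tfrac12<\sigma<\min(c,1)$ --- impossible when $c\le\tfrac12$, which the hypotheses allow and which occurs in the paper's own application (Corollary~\ref{cor:gamma-pow-xlog} takes $c<1/\beta$, so $c<\tfrac12$ for $\beta\ge2$). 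This is precisely what the $\eta$-family is for, and it is not, as you suggest, a device to rotate the branch cut near $[\alpha,\alpha+\delta]$: the paper splits the integral according to $\sgn\phi$, damps the exponential to $\e^{(\mp\eps+i)t\phi}$ so that the Cahen--Mellin formula applies with genuine exponential decay, performs the contour shift and the Hankel analysis at fixed $\eps$ (this is where $\phi_{s,\eta}$ with $\eta=\frac2\pi\arctan\eps$ and the sup-over-$\eta$ bound on $\Re(s)=\alpha+\delta$ enter, giving estimates uniform in $\eps$), and only then lets $\eps\to0$ by dominated convergence. Without this regularization (or an equivalent one) your argument covers only $c>\tfrac12$, and even then the boundary-case Mellin formula for $\e^{iu}-1$ requires a limiting argument rather than a direct appeal to decay; the treatment of the horizontal segments likewise needs the integrability hypothesis or a choice of heights, not pointwise decay of $G_\eta$. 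The extraction of the main terms and constants in your sketch is otherwise sound.
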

\begin{proof}
  We write
  $$ \I[\phi](t) + 1 = \int \e^{it\phi(x)}\df\mu(x) = J_+ + J_- + J_0, $$
  where~$J_\pm$ corresponds to the part of the integral restricted to~$\pm\phi>0$.

  For all~$\eps\in(0,\frac{\pi}2\eta_0)$, define
  $$ J_+(\eps) := \int_{\phi(x)>0} \e^{(-\eps+i)t\phi(x)}\df\mu(x), \qquad J_-(\eps) := \int_{\phi(x)<0} \e^{(\eps+i)t\phi(x)}\df\mu(x).  $$
  By dominated convergence, we have~$J_+ := \lim_{\eps\to 0^+} J_+(\eps)$, and similarly for~$J_-$. We use the Mellin transform formula for the exponential
  $$ \e^{-y} = \frac1{2\pi i} \int_{-c/2-i\infty}^{-c/2+i\infty} \Gamma(-s) \abs{y}^s \e^{s\arg(y)} \df s $$
  valid for~$\Re(y)>0$, see~\cite[eq.~17.43.1]{GZ} (the extension to non-real~$y$ is straightforward by the Stirling formula~\cite[eq.~8.327.1]{GZ}). Inserting this in~$J_{\pm}(\eps)$, we obtain
  $$ J_+(\eps) + J_-(\eps) = \frac1{2\pi i} \int_{-c/2-i\infty}^{-c/2+i\infty} \Gamma(-s) G_\eta(s) \abs{1+i\eps}^s t^s \df s, $$
  where~$\eta = \frac2\pi \arctan\eps\leq \frac{2\eps}{\pi}\leq \eta_0$. We move the contour forward to~$\Re(s) = \alpha + \delta$. The simple pole at~$s=0$ contributes~$\int_{\phi(x)\neq 0} \df \mu(x)$, and therefore by adding the contribution from~$J_0$ we get
  \begin{align*}
    J_0 + J_+(\eps) + J_-(\eps) = {}& 1 + R + \frac1{2\pi i}\int_{H(\alpha, \alpha+\delta)} \Gamma(-s) G_\eta(s) t^s \abs{1+i\eps}^s\df s \\
    {}& \quad + \frac1{2\pi i}\int_{\Re(s) = \alpha + \delta} \Gamma(-s) G_\eta(s) t^s \abs{1+i\eps}^s \df s,
  \end{align*}
  where~$R$ consists of the contribution of the residues at~$1$ (if~$\alpha>1$) and~$2$ (if~$\alpha>2$). Here~$H(\alpha, \alpha+\delta)$ is a Hankel contour, going from~$\alpha+\delta-i0$ to~$\alpha+\delta+i0$ passing around~$\alpha$ from the left. The last integral is bounded by the triangle inequality, using our first hypothesis on~$G_\eta$, which gives
  $$ \frac1{2\pi i}\int_{\Re(s) = \alpha + \delta} \Gamma(-s) G_\eta(s) t^s \abs{1+i\eps}^s \df s \ll t^{\alpha+\delta}, $$
  uniformly in $\eps$.
  Passing to the limit~$\eps\to 0$, there remains to prove
  $$ \frac1{2\pi i}\int_{H(\alpha, \alpha+\delta)} \Gamma(-s) G_0(s) t^s \df s = c_\ast t^\alpha \abs{\log t}^{\xi-1+\upsilon_\alpha} + O(t^\alpha \abs{\log t}^{\xi-1+\upsilon_\alpha-\rho}). $$
  This is done by using our second hypothesis along with a standard Hankel contour integration argument; we refer to \textit{e.g.} Corollary II.0.18 of~\cite{Tenenbaum2015a} for the details.
\end{proof}

An important special case is the following.

\begin{corollary}\label{cor:gamma-pow-xlog}
  Let~$\mu$ be defined on~$[0, 1]$ by~$\df \mu(x) = f(x) \df x$ where~$f \in \CC^1([0, 1])$. Let~$a\in\R\smallsetminus\{0\}$. For all~$\beta>\frac13$,~$\lambda \geq0 $, and~$\phi$ given by
  $$ \phi(x)= a x^{-\beta}\abs{\log x}^\lambda $$
one has $\phi \in \G(1/\beta, \abs{\log}^{\lambda/\beta+\upsilon_{1/\beta}}, \abs{\log}^{\lambda/\beta+\upsilon_{1/\beta}-1+\eps})$ for any $\eps\in(0,1)$ and with 
\begin{equation*}
  c_\ast = f(0) \frac{\abs{a}^{1/\beta} e^{\frac{-\pi i\sgn a}{2\beta}}}{\beta^{\lambda/\beta+1}} \times
  \begin{cases}
    -(\lambda+1)^{-1}, & \beta=1,\\
    (4\lambda+2)^{-1}, & \beta=1/2,\\
    \Gamma(-1/\beta), & \beta\notin\{1,1/2\}.\\
  \end{cases}
\end{equation*}
and $c_1 = \int \phi \df\mu$ if $\beta<1$ and $c_2 = -\frac12\int \abs{\phi}^2 \df\mu$ if $\beta<\frac12$.

\end{corollary}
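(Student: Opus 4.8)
The plan is to deduce the statement from Proposition~\ref{prop:Dirichlet-to-G}, applied with $\alpha=1/\beta\in(\tfrac13,3)$, $\xi=\lambda/\beta+1=\lambda\alpha+1$ and $\rho=1-\eps$, the auxiliary parameters $\eta_0,c,\delta$ being taken small (it suffices that $c<\min(1/\beta,1/\lambda)$ and $\delta<\min(1/\beta,|1-\alpha|,|2-\alpha|)$). Since $\phi=ax^{-\beta}(\log\tfrac1x)^\lambda$ has constant sign $\sgn a$ on $(0,1)$, we get $\phi_{s,\eta}(x)=|a|^sx^{-\beta s}(\log\tfrac1x)^{\lambda s}\exp(-s\tfrac{\pi i}2(1-\eta)\sgn a)$ and hence
\[
G_\eta(s)=|a|^s\e^{-s\frac{\pi i}2(1-\eta)\sgn a}\int_0^1 x^{-\beta s}(\log\tfrac1x)^{\lambda s}f(x)\df x .
\]
Writing $f=f(0)+(f-f(0))$, the substitution $x=\e^{-u}$ followed by a contour rotation gives $\int_0^1 x^{-\beta s}(\log\tfrac1x)^{\lambda s}\df x=\Gamma(\lambda s+1)(1-\beta s)^{-\lambda s-1}$ for $0<\Re(s)<1/\beta$, where the power denotes $\exp((-\lambda s-1)\log(1-\beta s))$ with the branch cut along $1-\beta s\in(-\infty,0]$, i.e. $s\in[1/\beta,\infty)$. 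This exhibits the analytic continuation of the $f(0)$-part to $\C\setminus[1/\beta,\infty)$ (the poles of $\Gamma(\lambda s+1)$ being at $s\le-1/\lambda<0$), its only singularity in the relevant region being the branch point $s=\alpha$. Since $f\in\CC^1$ we have $f-f(0)=O(x)$ near $0$, so the remaining integral converges and is holomorphic on $\{0<\Re(s)<2/\beta\}\supset\{0<\Re(s)\le\alpha+\delta\}$. This furnishes the analytic continuation of $G_\eta$ required in the proposition, and~\eqref{eq:gammaG-prebound} holds for the chosen $c$ (the constraint $c\lambda<1$ comes from the vanishing of $\phi$ at $x=1$ when $\lambda>0$).

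To obtain~\eqref{eq:polar-bhvr}, write $1-\beta s=\beta(\alpha-s)$ and factor
\[
(1-\beta s)^{-\lambda s-1}=\beta^{-\lambda s-1}(\alpha-s)^{-\xi}(\alpha-s)^{-\lambda(s-\alpha)},
\]
so that $(\alpha-s)^\xi G_0(s)$ is the product of the factor $(\alpha-s)^{-\lambda(s-\alpha)}=\exp(-\lambda(s-\alpha)\log(\alpha-s))=1+O(|s-\alpha|^{1-\eps})$ with a function holomorphic at $s=\alpha$ (namely $|a|^s\e^{-s\frac{\pi i}2\sgn a}f(0)\Gamma(\lambda s+1)\beta^{-\lambda s-1}$, to which one adds $(\alpha-s)^\xi$ times the holomorphic $(f-f(0))$-contribution, of size $O(|s-\alpha|^\xi)$). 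Since $\xi\ge1>\rho$, this gives~\eqref{eq:polar-bhvr} with $\varrho$ equal to the value of the holomorphic factor at $s=\alpha$, namely $\varrho=f(0)|a|^{1/\beta}\e^{-\pi i\sgn a/(2\beta)}\Gamma(\lambda/\beta+1)\beta^{-\lambda/\beta-1}$. Feeding this into~\eqref{cfc}, using $\Gamma(\xi)=\Gamma(\lambda/\beta+1)$ and $\Gamma(\xi+1)=(\lambda/\beta+1)\Gamma(\lambda/\beta+1)$ and specializing $\beta\in\{1,\tfrac12\}$ in the exceptional cases, reproduces exactly the three stated values of $c_\ast$.

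There remains the integrability of $\Gamma(-s)G_\eta(s)$ on the line $\Re(s)=\alpha+\delta$, uniformly in $\eta\in[0,\eta_0]$. As $|\Gamma(-s)|\ll|\tau|^{-\alpha-\delta-1/2}\e^{-\pi|\tau|/2}$ and $|\e^{-s\frac{\pi i}2(1-\eta)\sgn a}|\le\e^{\pi|\tau|/2}$ on this line, it suffices to show $\int_0^1 x^{-\beta s}(\log\tfrac1x)^{\lambda s}f(x)\df x=O(|\tau|^{-1})$ there. For the $f(0)$-part this is Stirling: since $\arg(1-\beta s)\to\mp\tfrac\pi2\sgn\tau$, one finds $|\Gamma(\lambda s+1)(1-\beta s)^{-\lambda s-1}|\ll|\tau|^{-1/2}\e^{-\pi\lambda|\tau|}$ if $\lambda>0$ (and $\ll|\tau|^{-1}$ if $\lambda=0$). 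For the $(f-f(0))$-part, the substitution $x=\e^{-u}$ turns the integral into $\int_0^\infty A(u)\e^{i\tau(\beta u+\lambda\log u)}\df u$ with absolutely integrable amplitude $A(u)=u^{\lambda(\alpha+\delta)}\e^{(\beta(\alpha+\delta)-1)u}(f(\e^{-u})-f(0))$; one integration by parts against $\e^{i\tau(\beta u+\lambda\log u)}$, whose phase has derivative $\beta+\lambda/u\ge\beta>0$, produces a boundary term of size $O(|\tau|^{-1})$ and a remaining integral against $\tfrac{d}{du}(A/(\beta+\lambda/u))\in L^1(0,\infty)$, whence the gain of $|\tau|^{-1}$. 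All bounds are uniform in $\eta$.

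Proposition~\ref{prop:Dirichlet-to-G} then yields $\phi\in\G(1/\beta,|\log|^{\xi-1+\upsilon_{1/\beta}},|\log|^{\xi-1+\upsilon_{1/\beta}-\rho})$, which is the assertion because $\xi-1=\lambda/\beta$ and $\rho=1-\eps$; the coefficients $c_1=iG_0(1)$ (for $\beta<1$) and $c_2=\tfrac12G_0(2)$ (for $\beta<\tfrac12$) equal $\int\phi\df\mu$ and $-\tfrac12\int|\phi|^2\df\mu$ respectively, since $\phi_1=-i\phi$, $\phi_2=-\phi^2$, and the relevant moments converge precisely in those ranges, so that the continuation of $G_0$ agrees there with the defining integral. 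I expect the main work to lie in the first and third paragraphs: pinning down the correct branch of the $s$-dependent power $(1-\beta s)^{-\lambda s-1}$ — needed both to continue $G_\eta$ and to extract $\varrho$ — and establishing the contour bound, which for $\lambda>0$ rests on the exponential cancellation inside $\Gamma(\lambda s+1)(1-\beta s)^{-\lambda s-1}$ and, for the error term, on a single non-stationary-phase integration by parts; the remainder is routine bookkeeping.
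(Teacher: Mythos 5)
Your argument is correct and rests on the same engine as the paper -- Proposition~\ref{prop:Dirichlet-to-G} applied with $\alpha=1/\beta$, $\xi=\lambda/\beta+1$ and the explicit Mellin evaluation $\int_0^1 x^{-\beta s}\abs{\log x}^{\lambda s}\df x=\Gamma(\lambda s+1)(1-\beta s)^{-\lambda s-1}$, leading to the same $\varrho$ and hence the same $c_\ast$ via \eqref{cfc} -- but you organize the contribution of the density $f$ differently. The paper writes $f(x)\df x=f(0)\chi(x)\df x+xg(x)\df x$ with $g$ continuous and applies Proposition~\ref{prop:Dirichlet-to-G} only to the constant part, for which $G_\eta$ is exactly the Gamma expression above; the remainder $xg(x)\df x$ is disposed of by the elementary Taylor bound of Proposition~\ref{prop:intgamma-taylor}, since $\int\abs{\phi}^{\alpha'}x\df x<\infty$ for any $\alpha'<\min(3,2/\beta)$, and one can take $\alpha'>1/\beta$ so that this error is absorbed. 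You instead keep the whole measure inside $G_\eta$ and must therefore also continue and bound the $(f-f(0))$-part on the line $\Re(s)=\alpha+\delta$; your non-stationary-phase integration by parts giving $O(\abs{\tau}^{-1})$ is genuinely needed there (mere boundedness would not give integrability once $\beta\geq 2$, since $\abs{\Gamma(-s)}\e^{\pi\abs{\tau}/2}\asymp\abs{\tau}^{-\Re(s)-1/2}$ with $\Re(s)$ close to $1/\beta$), and the argument goes through because $f\in\CC^1([0,1])$. So both proofs are sound: the paper's splitting buys a completely explicit $G_\eta$ and avoids any oscillatory-integral estimate, while yours keeps a single application of Proposition~\ref{prop:Dirichlet-to-G} at the price of that extra vertical-line bound. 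Two minor remarks: your choice $c<\min(1/\beta,1/\lambda)$ for \eqref{eq:gammaG-prebound} is the careful one (the negative moment near $x=1$ does require $c\lambda<1$ when $\lambda>0$, a point the paper's ``any fixed $c<1/\beta$'' glosses over), and your extraction of the polar behaviour through $(\alpha-s)^{-\lambda(s-\alpha)}=1+O(\abs{s-\alpha}^{1-\eps})$, together with the $O(\abs{s-\alpha}^{\xi})$ contribution of the holomorphic part, is exactly the computation implicit in the paper's ``easily verified''.
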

\begin{proof}
First, we write $\df\mu(x) = f(0)\chi(x)\df x  + x g(x) \df x$, where $\chi$ is the characteristic function of the interval $[0,1]$ and~$g\in \CC([0, 1])$. For the contribution of $\chi\df x$ we apply Proposition~\ref{prop:Dirichlet-to-G} with any fixed~$c<1/\beta$, $\alpha=1/\beta$, $\xi=\lambda/\beta+1$, any fixed~$\rho\in(0,1)$ and $\delta>0$. By~\cite[4.272.6]{GZ}, for~$\Re(s)<1/\beta$ and  $\eta\in[0,1]$ we have 
  $$ G_\eta(s)=e^{-s \tfrac {\pi i}2(1-\eta)\sgn(a)}|a|^s \int_0^1 x^{-\beta s}\abs{\log x}^{\lambda s}\df x = e^{-s \tfrac {\pi i}2(1-\eta)\sgn(a)}|a|^s  \frac{\Gamma(\lambda s+1)}{(1-\beta s)^{\lambda s + 1}}. $$
Notice also that by Stirling's formula $G_\eta(s)\ll \e^{\pi(\frac{1-\eta}2) |\tau|} |\tau|^{-1/2}$ as $|\tau| = |\Im s|\to\infty$, so that in any case $\Gamma(-s)G_\eta(s) \ll |\tau|^{-1-\Re(s)}$. Therefore the hypotheses of Proposition~\ref{prop:Dirichlet-to-G} are easily verified with 
$$ \varrho = \abs{a}^{1/\beta} e^{\frac{-\pi i\sgn a}{2\beta}}\frac{\Gamma(\lambda/\beta+1)}{\beta^{\lambda/\beta+1}}. $$
Thus,
$$
\int_0^1 (\e^{it\phi(x)}-1) \df x = it c_1'  +c_2' t^2 +c_* t^{1/\beta}\abs{\log t}^{\lambda/\beta+\upsilon_{1/\beta}} + O(t^{1/\beta}\abs{\log t}^{\lambda/\beta+\upsilon_{1/\beta}-\rho})
$$
 with coefficients as given in~\eqref{cfc} with $G_0(1)=-i\int \phi \chi \,dx$ and $G_0(2)=- \int \phi^2 \chi \,dx$.
Finally, as in Proposition~\ref{prop:intgamma-taylor} we deduce 
$$
\int (\e^{it\phi(x)}-1) x g(x) \df x = i c_1''t  +c_2'' t^2 + O(K t^{\alpha'})
$$
for any $0<\alpha'<\min(3,\frac2{\beta})$ and with $c_1''=\int \phi(x) x g(x) \df x$ if $\alpha'>1$ and $c_2''=-\frac12\int \phi(x)^2 x g(x) \df x$ if $\alpha'>2$. The result then follows.
\end{proof}

\subsection{Addition}

\begin{lemma}\label{lem:addition}
  For~$j\in\{1, 2\}$, let~$\delta_j(x) = \e^{it\phi_j(x)}-1$. Then
  \begin{align*}
    \I[\phi_1 + \phi_2](t) = {}& \I[\phi_1](t) + \I[\phi_2](t) + \int \delta_1(x)\delta_2(x)\df\mu(x) \numberthis\label{eq:split-delta12} \\
    = {}&  \I[\phi_1](t) + \I[\phi_2](t) + O\Big(\prod_{j\in\{1,2\}}\abs{\Re \I[\phi_j](t)}^{1/2}\Big)
  \end{align*}
\end{lemma}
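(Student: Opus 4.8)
The plan is to combine an elementary pointwise algebraic identity with the Cauchy--Schwarz inequality. First I would expand, for every~$x$,
$$ \e^{it(\phi_1(x)+\phi_2(x))}-1 = (1+\delta_1(x))(1+\delta_2(x))-1 = \delta_1(x)+\delta_2(x)+\delta_1(x)\delta_2(x). $$
All three summands are bounded by~$4$ in absolute value and~$\mu$ is a probability measure, so each is~$\mu$-integrable; integrating this identity against~$\df\mu$ and recalling that~$\I[\phi_j](t)=\int\delta_j(x)\df\mu(x)$ gives the first displayed equality~\eqref{eq:split-delta12} at once.

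For the second equality it remains to estimate~$\int\delta_1\delta_2\,\df\mu$. Since~$\abs{\delta_j}\leq 2$, we have~$\delta_j\in L^2(\mu)$, and the Cauchy--Schwarz inequality yields
$$ \Big|\int\delta_1(x)\delta_2(x)\df\mu(x)\Big| \leq \prod_{j\in\{1,2\}}\Big(\int\abs{\delta_j(x)}^2\df\mu(x)\Big)^{1/2}. $$
The key observation is that the right-hand side is controlled by~$\abs{\Re\I[\phi_j](t)}$: from~$\abs{\e^{iu}-1}^2=2(1-\cos u)$ we get~$\abs{\delta_j(x)}^2=2\bigl(1-\cos(t\phi_j(x))\bigr)$, while taking real parts in the definition of~$\I$ gives~$\Re\I[\phi_j](t)=\int\bigl(\cos(t\phi_j(x))-1\bigr)\df\mu(x)=-\tfrac12\int\abs{\delta_j(x)}^2\df\mu(x)\leq 0$. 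Hence~$\int\abs{\delta_j}^2\df\mu=2\abs{\Re\I[\phi_j](t)}$, and substituting this into the Cauchy--Schwarz bound produces exactly the claimed error term~$O\bigl(\prod_j\abs{\Re\I[\phi_j](t)}^{1/2}\bigr)$, with implied constant~$2$.

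There is no genuine obstacle here; the argument is short and self-contained. The only points deserving a word of care are the integrability of the various terms (immediate from the uniform bound~$\abs{\delta_j}\leq 2$ and~$\mu(\R)=1$) and the sign computation identifying~$\int\abs{\delta_j}^2\df\mu$ with~$-2\Re\I[\phi_j](t)$; the latter incidentally shows that~$\Re\I[\phi_j](t)\leq 0$ always, so the absolute values appearing in the statement are harmless and could be dropped.
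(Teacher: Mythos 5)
Your proof is correct and follows essentially the same route as the paper's: the pointwise identity $\e^{it(\phi_1+\phi_2)}-1=\delta_1+\delta_2+\delta_1\delta_2$ integrated against $\mu$, then Cauchy--Schwarz combined with the identity $\int\abs{\delta_j}^2\df\mu=-2\Re\I[\phi_j](t)$ (which is precisely what the paper means by ``expanding the square''). Your version merely spells out that last computation, and correctly notes the implied constant $2$ and the sign $\Re\I[\phi_j](t)\leq 0$.
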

\begin{proof}
  The first equation is simply the relation~$\e^{it(\phi_1(x) + \phi_2(x))} - 1 = \delta_1(x) + \delta_2(x) + \delta_1(x)\delta_2(x)$ integrated over~$x$. The last term is bounded using the Cauchy-Schwarz inequality
  $$ \Big(\int\abs{\delta_1(x)\delta_2(x)}\df\mu(x)\Big)^2 \leq \prod_{j\in\{1,2\}} \int \abs{\delta_j(x)}^2\df\mu(x) $$
  and expanding the square on the right-hand side.
\end{proof}

\begin{proposition}\label{prop:composition}
  For~$j\in\{1, 2\}$, let~$\alpha_j\in (0, 2]$, let~$L_j, R_j$ be positive functions defined on a neighborhood of~$0$ in~$\R_+^*$, and~$\phi_j \in \G(\alpha_j, L_j, R_j)$. If~$\alpha_1 \leq \alpha_2$, and under the following assumptions:
  \begin{itemize}
  \item $R_j(t), L_j(t) = t^{o(1)}$ as~$t\to 0$,
  \item $R_j(t) = O(L_j(t))$,
  \item $t^2 = O(t^{\alpha_1}L_1(t))$,
  \end{itemize}
  we have
  $$ \phi_1 + \phi_2 \in \G(\alpha_1, L_1, R_+), \qquad R_+ = \begin{cases} R_1 & \text{if } \alpha_1 < \alpha_2, \\ R_1 + L_2 + \sqrt{L_1L_2} & \text{if }\alpha_1=\alpha_2<2, \\ R_1 + L_2 + \sqrt{L_1}(\sqrt{L_2}+1) & \text{if }\alpha_1=\alpha_2=2. \end{cases} $$
  Moreover,
  \begin{align*}
    c_1(\phi_1 + \phi_2) = {}& c_1(\phi_1) + c_1(\phi_2), \\
    c_\ast(\phi_1 + \phi_2) = {}& c_\ast(\phi_1).
  \end{align*}
\end{proposition}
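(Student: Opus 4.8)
The plan is to combine Lemma~\ref{lem:addition} with the definition of the classes~$\G(\alpha_j,L_j,R_j)$. Inserting the expansions
\[
  \I[\phi_j](t) = i c_1(\phi_j)t + c_2(\phi_j)t^2 + c_\ast(\phi_j)\,t^{\alpha_j}L_j(t) + O\bigl(t^3 + t^{\alpha_j}R_j(t)\bigr)\qquad(j=1,2),
\]
valid for $t$ small, into the second expression provided by Lemma~\ref{lem:addition}, one obtains
\[
  \I[\phi_1+\phi_2](t) = i\bigl(c_1(\phi_1)+c_1(\phi_2)\bigr)t + \bigl(c_2(\phi_1)+c_2(\phi_2)\bigr)t^2 + c_\ast(\phi_1)\,t^{\alpha_1}L_1(t) + E(t),
\]
where
\[
  E(t) = c_\ast(\phi_2)\,t^{\alpha_2}L_2(t) + O\Bigl(t^3 + t^{\alpha_1}R_1(t) + t^{\alpha_2}R_2(t) + \abs{\Re\I[\phi_1](t)\,\Re\I[\phi_2](t)}^{1/2}\Bigr).
\]
Everything then reduces to showing $E(t)=O\bigl(t^3+t^{\alpha_1}R_+(t)\bigr)$ with the announced $R_+$; the values of $c_1,c_2,c_\ast$ for $\phi_1+\phi_2$ are read off directly from the three displayed main terms. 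To handle the Cauchy--Schwarz error I would first record the crude estimate $\abs{\Re\I[\phi_j](t)}\ll t^2+t^{\alpha_j}L_j(t)$, obtained by taking real parts in the expansion of~$\I[\phi_j]$ and using $R_j=O(L_j)$ and $t^3\ll t^2$; the hypothesis $t^2=O(t^{\alpha_1}L_1(t))$ upgrades this, for $j=1$, to $\abs{\Re\I[\phi_1](t)}\ll t^{\alpha_1}L_1(t)$ when $\alpha_1<2$ and to $\abs{\Re\I[\phi_1](t)}\ll t^2L_1(t)$ when $\alpha_1=2$ (the same hypothesis forcing $L_1(t)\gg 1$ in the latter case). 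For $j=2$ no such upgrade is available, so one keeps $\abs{\Re\I[\phi_2](t)}^{1/2}\ll t+t^{\alpha_2/2}L_2(t)^{1/2}$ by way of $\sqrt{a+b}\le\sqrt a+\sqrt b$.

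Suppose first $\alpha_1<\alpha_2$, so that $\alpha_1<2$. Multiplying $\abs{\Re\I[\phi_1](t)}^{1/2}\ll t^{\alpha_1/2}L_1(t)^{1/2}$ by $\abs{\Re\I[\phi_2](t)}^{1/2}\ll t+t^{\alpha_2/2}L_2(t)^{1/2}$ shows that the Cauchy--Schwarz error in $E(t)$ is $\ll t^{1+\alpha_1/2}L_1(t)^{1/2}+t^{(\alpha_1+\alpha_2)/2}\bigl(L_1(t)L_2(t)\bigr)^{1/2}$. Hence every stray term in $E(t)$ — namely $t^{\alpha_2}L_2(t)$, $t^{\alpha_2}R_2(t)$, $t^{1+\alpha_1/2}L_1(t)^{1/2}$, $t^{(\alpha_1+\alpha_2)/2}(L_1(t)L_2(t))^{1/2}$, and $t^3$ — carries a power of $t$ strictly larger than $\alpha_1$ (using $\alpha_1<\alpha_2$, $\alpha_1<2$, $\alpha_1<3$); since the accompanying slowly-varying factors are $t^{o(1)}$, each such term is $o\bigl(t^{\alpha_1}R_1(t)\bigr)$, and therefore $E(t)=O\bigl(t^{\alpha_1}R_1(t)\bigr)$, i.e. $R_+=R_1$.

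Suppose now $\alpha_1=\alpha_2=:\alpha$. Then $c_\ast(\phi_2)t^\alpha L_2(t)$ and the term $O(t^\alpha R_2(t))$ in $E(t)$ are both $O(t^\alpha L_2(t))$. If $\alpha<2$, multiplying $\abs{\Re\I[\phi_1](t)}^{1/2}\ll t^{\alpha/2}L_1(t)^{1/2}$ by $\abs{\Re\I[\phi_2](t)}^{1/2}\ll t+t^{\alpha/2}L_2(t)^{1/2}$ yields the contribution $t^{1+\alpha/2}L_1(t)^{1/2}+t^\alpha\bigl(L_1(t)L_2(t)\bigr)^{1/2}$; here $t^{1+\alpha/2}L_1(t)^{1/2}=t^\alpha\cdot t^{1-\alpha/2}L_1(t)^{1/2}$, and since $1-\alpha/2>0$ the factor $t^{1-\alpha/2}L_1(t)^{1/2}$ is bounded by a fixed positive power of~$t$ and hence is $O(R_1(t))$ (as $R_1(t)=t^{o(1)}$), so this term is $O(t^\alpha R_1(t))$. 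Collecting everything gives $E(t)=O\bigl(t^\alpha(R_1(t)+L_2(t)+\sqrt{L_1(t)L_2(t)})\bigr)$, i.e. $R_+=R_1+L_2+\sqrt{L_1L_2}$. If instead $\alpha=2$, then $L_1(t)\gg1$, so $\abs{\Re\I[\phi_1](t)}^{1/2}\ll tL_1(t)^{1/2}$ and $\abs{\Re\I[\phi_2](t)}^{1/2}\ll t(1+L_2(t))^{1/2}\ll t(1+L_2(t)^{1/2})$, whence the Cauchy--Schwarz error contributes $t^2L_1(t)^{1/2}(1+L_2(t)^{1/2})$; together with the $O(t^2L_2(t))$ and $O(t^3+t^2R_1(t))$ terms, and noting that $L_1(t)\gg1$ absorbs the stray $1$ and $\sqrt{L_2(t)}$ into $\sqrt{L_1(t)}$ and $\sqrt{L_1(t)L_2(t)}$, this gives $R_+=R_1+L_2+\sqrt{L_1}(\sqrt{L_2}+1)$.

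In every case the terms $i(c_1(\phi_1)+c_1(\phi_2))t$, $(c_2(\phi_1)+c_2(\phi_2))t^2$ and $c_\ast(\phi_1)t^{\alpha_1}L_1(t)$ are left intact, so $\phi_1+\phi_2\in\G(\alpha_1,L_1,R_+)$ with $c_1(\phi_1+\phi_2)=c_1(\phi_1)+c_1(\phi_2)$, $c_2(\phi_1+\phi_2)=c_2(\phi_1)+c_2(\phi_2)$ and $c_\ast(\phi_1+\phi_2)=c_\ast(\phi_1)$. I expect the only genuinely delicate point to be the bookkeeping in the equal-exponent cases: one must check that the cross error term $t^{1+\alpha_1/2}L_1(t)^{1/2}$ is genuinely absorbed by $t^{\alpha_1}R_1(t)$ when $\alpha_1<2$ — so that $R_+$ does \emph{not} pick up a spurious $\sqrt{L_1}$ — whereas it does contribute a $\sqrt{L_1}$ when $\alpha_1=2$; this dichotomy, together with the slow-variation hypotheses $R_j(t)=t^{o(1)}$, is exactly what produces the three-case formula for $R_+$. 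It is also worth noting that, unlike in the formulation given in the introduction, no hypothesis comparing $t^{\alpha_2}L_2(t)$ with $t^{\alpha_1}L_1(t)$ is required here, since any such surplus is simply discarded into $R_+$.
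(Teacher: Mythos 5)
Your proof is correct and follows essentially the same route as the paper: the paper's own argument is precisely to apply Lemma~\ref{lem:addition} and observe that the term~$ic_1t$ drops out when taking real parts, so that~$\abs{\Re\I[\phi_j](t)}\ll t^2+t^{\alpha_j}L_j(t)$, which is exactly your starting estimate. Your case-by-case bookkeeping of the Cauchy--Schwarz cross term simply fills in the details the paper leaves implicit, and it correctly reproduces the three-case formula for~$R_+$.
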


\begin{proof}
  We use Lemma~\ref{lem:addition}; when computing the real part in~\eqref{eq:split-delta12}, the term~$ic_1 t$ vanishes.
\end{proof}

\begin{remark}
  Note that using this result might induce a slight quantitative loss in the two cases when~$\alpha_1=\alpha_2$. What is gained at this price is that we are only required to study each~$\phi_j$ separately, which simplifies the analysis.

  We also remark that this estimate is useful only when the term~$c_2 t^2$ is not relevant in~\eqref{eq:expansion-Iphi}. In the complementary case, Proposition~\ref{prop:intgamma-taylor} can be used, although the ensuing error term will typically be worse than optimal by a factor of~$\abs{\log t}$.

  It is straightforward to generalize Proposition~\ref{prop:composition}, affecting to each~$\phi_j$ a different value of the frequency: under the same hypotheses and notations, and additionally that~$L_j, R_j$ tend monotonically to~$+\infty$ at~$0$,
  \begin{align*}
    \int \e^{i t_1 \phi_1(x) + it_2 \phi_2(x)}\df\mu(x) = {}& 1 + i c_1(\phi_1)t_1 + i c_1(\phi_2)t_2 + c_* t_1^{\alpha_1} L_1(t_1) + O(t_+^2 + t_+^{\alpha_1} R_+(t_+)),
  \end{align*}
  where~$c_1, c_*$ are as in the conclusion of Proposition~\ref{prop:composition}, and~$t_+ = \max\{t_1, t_2\}$.
\end{remark}

\section{Applications}\label{sec:applications}
We now describe the applications we will be interested in. The measure is the Gauss-Kuzmin distribution
$$ \df \mu(x) = \frac{\df x}{(1+x)\log 2} \qquad (x\in[0, 1]). $$
The measure~$\mu$ is invariant under the Gauss map~$T(x) = \{1/x\}$ on~$(0, 1)$, in particular,
\begin{equation}
  \I[\phi \circ T](t) = \I[\phi](t).\label{eq:integral-t-invarT}
\end{equation}

\subsection{Central values of the Estermann function}

The first application we discuss is the ``period function''~$\phi: \R \to \C$ associated with the Estermann function~\eqref{eq:def-estermann}, namely
$$ \phi(x) = D(\tfrac12, 1/x) - D(\tfrac12, x), $$
initially defined in~$\Q \cap (0, 1]$. By~\cite{Bettin2016}, this function can be extended to a continuous function on~$(0, 1]$, more precisely given by an expression of the shape~\eqref{eq:vphi-estermann-explicit} below. Interpreting~$\phi$ to be~$\R^2$-valued, the analogue of the integral~\eqref{eq:integral-question-Iphi} is estimated using the following.

\begin{corollary}\label{cor:integralapplication-estermann}
  Let~$\eps>0$,~$\cE:[0,1]\to\C$ be a bounded, continuous function, and
  \begin{equation}
    \begin{aligned}
      \vphi_j(x) := {}&
      \begin{pmatrix}
        \frac12 x^{-1/2}\big(\log(1/x) + \gamma_0 - \log(8\pi) - \frac\pi2 \big) + \zeta(\tfrac12)^2 + \Re\cE((-1)^jx) \\
        \tfrac{(-1)^{j-1}}2 x^{-1/2}\big(\log(1/x) + \gamma_0 - \log(8\pi) + \frac\pi2 \big) + \Im\cE((-1)^jx)
      \end{pmatrix}.
    \end{aligned}\label{eq:vphi-estermann-explicit}
\end{equation}
  Let also~$\bm{u}_j := \big(\begin{smallmatrix} 1 \\ (-1)^{j-1} \end{smallmatrix} \big)$. Then for some vector~${\bm\mu}\in\R^2$, and all~$\vt \in\R^2$, we have
  \begin{align*}
    \int_0^1 & \e^{i\langle \vt, \vphi_{1}(x) + \vphi_{2}(T(x))\rangle} \df\mu(x) \\
    {}& = 1 + i\langle \vt, {\bm \mu}\rangle - \frac1{3\log 2}\sum_{j\in\{1,2\}} \langle\vt, \bm{u}_j\rangle^2 \abs{\log\abs{\langle\vt, \bm{u}_j \rangle}}^3 + O_\eps(\nvt^2 \abs{\log \nvt}^{2+\eps}).
  \end{align*}
\end{corollary}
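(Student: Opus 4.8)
The plan is to split the exponent into $\langle\vt,\vphi_1(x)\rangle$ and $\langle\vt,\vphi_2(T(x))\rangle$, estimate each piece via the one-dimensional results of Section~2, and handle the resulting cross term by hand. Since $\langle\vt,\vphi_1(x)+\vphi_2(T(x))\rangle=\langle\vt,\vphi_1(x)\rangle+\langle\vt,\vphi_2(T(x))\rangle$, the identity~\eqref{eq:split-delta12} of Lemma~\ref{lem:addition} (with ``$t$'' the constant $1$, applied to the functions $\langle\vt,\vphi_1(\cdot)\rangle$ and $\langle\vt,\vphi_2(T(\cdot))\rangle$) writes the integral as
\[
  \int_0^1\!\big(\e^{i\langle\vt,\vphi_1(x)\rangle}-1\big)\df\mu+\int_0^1\!\big(\e^{i\langle\vt,\vphi_2(T(x))\rangle}-1\big)\df\mu+E_\times,
\]
with $E_\times:=\int_0^1(\e^{i\langle\vt,\vphi_1(x)\rangle}-1)(\e^{i\langle\vt,\vphi_2(T(x))\rangle}-1)\df\mu$. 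By the $T$-invariance~\eqref{eq:integral-t-invarT} the second integral equals $\int_0^1(\e^{i\langle\vt,\vphi_2(y)\rangle}-1)\df\mu(y)$, so it suffices to estimate $\int_0^1(\e^{i\langle\vt,\vphi_j(x)\rangle}-1)\df\mu$ for $j\in\{1,2\}$ and to bound $E_\times$.

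For a fixed $j$, the explicit shape~\eqref{eq:vphi-estermann-explicit} gives $\langle\vt,\vphi_j(x)\rangle=\tfrac12\langle\vt,\bm{u}_j\rangle\,\phi_0(x)+\psi_j(x)$, where $\phi_0(x):=x^{-1/2}\abs{\log x}$ and the remainder $\psi_j$ (collecting the $x^{-1/2}$-order and bounded terms) satisfies $\abs{\psi_j(x)}\lesssim\nvt\,x^{-1/2}$ on $(0,1]$ with $\int_0^1\psi_j\df\mu$ linear in $\vt$. Applying Lemma~\ref{lem:addition} again, $\int_0^1(\e^{i\langle\vt,\vphi_j(x)\rangle}-1)\df\mu$ is the sum of: (a) $\I[\phi_0]\!\big(\tfrac12\langle\vt,\bm{u}_j\rangle\big)$, which Corollary~\ref{cor:gamma-pow-xlog} with $a=1$, $\beta=\tfrac12$, $\lambda=1$ (so $\phi_0\in\G(2,\abs{\log}^3,\abs{\log}^{2+\eps})$ and $c_\ast(\phi_0)=-\tfrac4{3\log2}$) evaluates to $i\,(\text{linear in }\vt)-\tfrac1{3\log2}\langle\vt,\bm{u}_j\rangle^2\abs{\log\abs{\langle\vt,\bm{u}_j\rangle}}^3+O_\eps(\nvt^2\abs{\log\nvt}^{2+\eps})$; (b) $\int_0^1(\e^{i\psi_j}-1)\df\mu=i\int_0^1\psi_j\df\mu+O(\nvt^2\abs{\log\nvt})$, obtained from $\abs{\e^{iu}-1-iu}\le\abs{u}\min(\abs{u},2)$ and $\int_0^1 x^{-1/2}\df\mu(x)<\infty$; and (c) the within-$j$ cross term, bounded by Cauchy--Schwarz by $\big(\int\abs{\e^{i\frac12\langle\vt,\bm{u}_j\rangle\phi_0}-1}^2\df\mu\big)^{1/2}\big(\int\abs{\e^{i\psi_j}-1}^2\df\mu\big)^{1/2}\lesssim(\nvt^2\abs{\log\nvt}^3)^{1/2}(\nvt^2\abs{\log\nvt})^{1/2}=\nvt^2\abs{\log\nvt}^2$. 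In~(a) I would pass from $\langle\vt,\bm{u}_j\rangle<0$ to $>0$ by conjugating $\I[\phi_0]$ (which affects only the linear term), and use that the error of Corollary~\ref{cor:gamma-pow-xlog} is uniform as its argument goes to $0$ because $u\mapsto u^2\abs{\log u}^{2+\eps}$ is increasing near $0$. Altogether $\int_0^1(\e^{i\langle\vt,\vphi_j(x)\rangle}-1)\df\mu=i\langle\vt,\bm\mu_j\rangle-\tfrac1{3\log2}\langle\vt,\bm{u}_j\rangle^2\abs{\log\abs{\langle\vt,\bm{u}_j\rangle}}^3+O_\eps(\nvt^2\abs{\log\nvt}^{2+\eps})$ for a fixed $\bm\mu_j\in\R^2$, the $j$-th main term being read as $0$ when $\langle\vt,\bm{u}_j\rangle=0$.

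The main obstacle will be the cross term $E_\times$. A direct Cauchy--Schwarz bound gives only $O(\nvt^2\abs{\log\nvt}^3)$, the size of the main terms themselves; one must exploit that the two factors are singular on disjoint sets --- the first near $x=0$, the second near the points $1/n$. I would decompose $[0,1]=\bigcup_{n\ge1}I_n$ with $I_n=(\tfrac1{n+1},\tfrac1n]$: on $I_n$, since $\phi_0$ varies there only by a bounded factor, $\abs{\e^{i\langle\vt,\vphi_1(x)\rangle}-1}\le\min\!\big(2,\,C\nvt\,n^{1/2}\log(n+2)\big)$; and the substitution $u=T(x)$, a bijection $I_n\to[0,1)$ of bounded distortion with $\df\mu(x)\le(\log2)^{-1}n^{-2}\df u$, gives $\int_{I_n}\abs{\e^{i\langle\vt,\vphi_2(T(x))\rangle}-1}\df\mu\lesssim n^{-2}\int_0^1\abs{\e^{i\langle\vt,\vphi_2(u)\rangle}-1}\df u\lesssim\nvt\,n^{-2}$, the last bound because $\abs{\langle\vt,\vphi_2(u)\rangle}\lesssim\nvt\big(u^{-1/2}\abs{\log u}+1\big)$ is Lebesgue-integrable on $(0,1)$. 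Summing $\min\!\big(2,C\nvt\,n^{1/2}\log(n+2)\big)\cdot\nvt\,n^{-2}$ over $n$ --- the range $n\lesssim\nvt^{-2}$ contributing $O(\nvt^2)$ since $\sum_n n^{-3/2}\log(n+2)<\infty$, the rest contributing $O(\nvt^3\abs{\log\nvt}^2)$ --- yields $E_\times=O(\nvt^2)$.

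Assembling the three contributions gives $\int_0^1\e^{i\langle\vt,\vphi_1(x)+\vphi_2(T(x))\rangle}\df\mu=1+i\langle\vt,\bm\mu\rangle-\tfrac1{3\log2}\sum_{j\in\{1,2\}}\langle\vt,\bm{u}_j\rangle^2\abs{\log\abs{\langle\vt,\bm{u}_j\rangle}}^3+O_\eps(\nvt^2\abs{\log\nvt}^{2+\eps})$ with $\bm\mu=\bm\mu_1+\bm\mu_2$. Every step other than the bound on $E_\times$ is a routine appeal to Section~2 or to the elementary inequalities $\abs{\e^{iu}-1}\le\min(2,\abs{u})$ and $\abs{\e^{iu}-1-iu}\le\abs{u}\min(\abs{u},2)$; the delicate point is the cross-term estimate, where Cauchy--Schwarz is too lossy and must be replaced by the Gauss-cylinder decomposition.
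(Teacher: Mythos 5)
Your proposal is correct and follows essentially the same route as the paper: split off the cross term via \eqref{eq:split-delta12} and the $T$-invariance \eqref{eq:integral-t-invarT}, treat each $\langle\vt,\vphi_j\rangle$ by Corollary~\ref{cor:gamma-pow-xlog} with $\beta=1/2$ plus a Cauchy--Schwarz/Taylor bookkeeping (which the paper delegates to Proposition~\ref{prop:composition} and its remark, while you redo it by hand), and bound the $\vphi_1(x)$--$\vphi_2(T(x))$ cross term by exploiting the inverse branches of the Gauss map. Your cylinder-by-cylinder substitution is exactly the paper's change of variables $\Delta(\vt)=\int(\e^{i\langle\vt,\vphi_2(x)\rangle}-1)F_x(\vt)\df x$ with $F_x(\vt)\ll\nvt$, and it yields the same $O(\nvt^2)$ bound.
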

\begin{proof}
  Let~$\eps\in(0, 1)$. Using Corollary~\ref{cor:gamma-pow-xlog} with~$\beta=1/2$ and~$\lambda\in\{0, 1\}$, and Proposition~\ref{prop:intgamma-taylor}, we obtain
  \begin{align*}
    (x \mapsto \pm \tfrac12 x^{-1/2}\abs{\log x}) \in \G(2, \abs{\log}^3, \abs{\log}^{2+\eps}), \\
    (x \mapsto (\gamma_0 - \log(8\pi) + \tfrac\pi 2) x^{-1/2}) \in \G(2, \abs{\log},\abs{\log}^\eps), \\
    (x \mapsto \Im\cE(\pm x)) \in \G(3, 1, 1),
  \end{align*}
  as well as~$c_\ast(x \mapsto \pm \tfrac12 x^{-1/2}\abs{\log x}) = -\frac{1}{3\log 2}$.
  From Proposition~\ref{prop:composition} and the ensuing remark, and using the property~\eqref{eq:integral-t-invarT}, we obtain for~$j\in\{1, 2\}$
  $$ \int_0^1 (\e^{i\langle{\vt,\vphi_j(x)}\rangle}-1)\df\mu(x) = i\langle\vt, \bm{\mu}_j\rangle + c_\ast \langle\vt, \bm{u}_j\rangle^2 \abs{\log \abs{\langle\vt, \bm{u}_j\rangle}}^3 + O_\eps(\nvt^2\abs{\log \nvt}^{2+\eps}), $$
  where~$\bm{\mu}_1, \bm{\mu}_2 \in \R^2$. On the other hand, we have
  $$ \Delta(\vt) := \int_0^1 (\e^{i\langle\vt, \vphi_{1}(x)\rangle}-1)(\e^{i\langle{\vt,\vphi_{2}(T(x))}\rangle}-1)\df\mu(x) = \int_0^1 (\e^{i\langle{\vt,\vphi_{2}(x)}\rangle}-1) F_x(\vt) \df x, $$
  where
  $$ F_x(\vt) = \frac1{\log 2}\sum_{n\geq 1} \frac{\e^{i\langle{\vt,\vphi_{1}(1/(n+x))}\rangle}-1}{(n+x)(n+x+1)}. $$
  By a Taylor expansion at order 1, we have~$\abs{F_x(\vt)}\ll \nvt$ uniformly in~$x$, and therefore
  $$ \abs{\Delta(\vt)} \ll \nvt^2 \int_0^1 \norm{\vphi_2(x)}\df x \ll \nvt^2. $$
  By~\eqref{eq:split-delta12}, we deduce
  $$ \int_0^1 \e^{i\langle \vt, \vphi_{1}(x) + \vphi_{2}(T(x))\rangle} \df\mu(x) = 1 + \int_0^1(\e^{i\langle{\vt,\vphi_{1}(x)}\rangle}+\e^{i\langle{\vt,\vphi_{2}(T(x))}\rangle}-2)\df\mu(x) + O(\nvt^2), $$
  whence the claimed estimate.
\end{proof}

\subsection{Moments of continued fractions coefficients}

The next application we consider pertains to the moments functions~$\Sigma_\lambda$ of continued fractions coefficients, where~$\lambda\geq 0$ is the order of the moment. The function of interest to us here is
$$ \phi_\lambda(x) = \floor{1/x}^\lambda. $$
The case~$\lambda<1/2$ can be easily dealt with using Proposition~\ref{prop:intgamma-taylor}, so we do not focus on it here.

A first approach is to use Proposition~\ref{prop:composition} to approximate~$\floor{1/x}$ by~$1/x$, and then use Corollary~\ref{cor:gamma-pow-xlog}. This leads to the following.

\begin{corollary}\label{cor:integralapplication-largemom}
  Let~$\lambda\geq 1/2$. The function~$\phi_\lambda$ given by~$\phi_\lambda(x) = \floor{1/x}^\lambda$ satisfies the following.
  \begin{itemize}
    \item If~$\lambda=1/2$, then with~$c_\ast = -1/(\log 2)$, we have
    \begin{equation}
      \I[\phi_{1/2}](t) = i c_1t + c_\ast t^2 \abs{\log t} + O_\eps(t^2 \abs{\log t}^\eps).\label{eq:appl-largemom-1/2}
    \end{equation}
    \item If~$\lambda>1/2$ and~$\lambda\neq 1$, then with~$c_\ast = -\exp(-\pi i/(2\lambda))\Gamma(1-1/\lambda)/\log 2$, we have
    \begin{align*}
      \I[\phi_\lambda](t) = (\1_{\lambda<1})ic_1t + c_\ast t^{1/\lambda} + O_\eps(t^{1/\lambda}\abs{\log t}^{-1+\eps})
    \end{align*}
  \end{itemize}
  When~$1/2\leq\lambda<1$, we have~$c_1 = \int_0^1 \phi_\lambda(x)\df\mu(x)$.
\end{corollary}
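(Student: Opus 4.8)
The plan is to write $\phi_\lambda(x) = \psi_\lambda(x) + \eta_\lambda(x)$ with $\psi_\lambda(x) := x^{-\lambda}$ and $\eta_\lambda(x) := \floor{1/x}^\lambda - x^{-\lambda}$, to handle the pure power $\psi_\lambda$ with Corollary~\ref{cor:gamma-pow-xlog}, to control the correction $\eta_\lambda$ by an elementary pointwise bound, and to recombine the two — via Proposition~\ref{prop:composition} when $\lambda > 1/2$, and via Lemma~\ref{lem:addition} directly when $\lambda = 1/2$. For the power term, $\df\mu = f\,\df x$ with $f(x) = 1/((1+x)\log 2) \in \CC^1([0,1])$ and $f(0) = 1/\log 2$, so Corollary~\ref{cor:gamma-pow-xlog} applies to $\psi_\lambda$ with $\beta = \lambda$ (which is $>1/3$ since $\lambda \ge 1/2$), $a = 1$, and log-exponent $0$. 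It gives $\psi_\lambda \in \G(1/\lambda, \abs{\log}^{\upsilon_{1/\lambda}}, \abs{\log}^{\upsilon_{1/\lambda}-1+\eps})$, which for $\lambda \ne 1/2, 1$ reads $\G(1/\lambda, 1, \abs{\log}^{-1+\eps})$ and for $\lambda = 1/2$ reads $\G(2, \abs{\log}, \abs{\log}^{\eps})$. Substituting into the displayed formula for $c_\ast$ and using $\Gamma(-1/\lambda) = -\lambda\,\Gamma(1-1/\lambda)$ gives $c_\ast(\psi_\lambda) = -\e^{-\pi i/(2\lambda)}\Gamma(1-1/\lambda)/\log 2$ when $\lambda \ne 1, 1/2$ and $c_\ast(\psi_{1/2}) = -1/\log 2$; moreover $c_1(\psi_\lambda) = \int \psi_\lambda\,\df\mu$ whenever $\lambda < 1$.

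For the correction term, from $1/x - 1 < \floor{1/x} \le 1/x$ we may write $\floor{1/x} = x^{-1}\big(1 - \theta(x)x\big)$ with $\theta(x) = \{1/x\} \in [0,1)$. The elementary estimate $|(1-u)^\lambda - 1| \ll_\lambda |u|$ for $|u| \le 1/2$ then yields $|\eta_\lambda(x)| \ll_\lambda x^{1-\lambda}$ for $x \le 1/2$, while for $x \in [1/2, 1]$ both $\floor{1/x}^\lambda$ and $x^{-\lambda}$ are bounded and $x^{1-\lambda} \gg_\lambda 1$; hence $|\eta_\lambda(x)| \ll_\lambda x^{1-\lambda}$ on all of $(0, 1]$. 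Consequently $\int_0^1 |\eta_\lambda|^\alpha\,\df\mu < \infty$ for every $\alpha$ with $\alpha(\lambda - 1) < 1$. When $\lambda > 1/2$ this set of admissible exponents contains some $\alpha_2 \in (1/\lambda, 2]$ (take $\alpha_2 = 2$ if $\lambda < 1$), and one can keep $\alpha_2 \ge 1$ whenever $\lambda < 1$; so Proposition~\ref{prop:intgamma-taylor} gives $\eta_\lambda \in \G(\alpha_2, 1, 1)$, with $c_1(\eta_\lambda) = \int \eta_\lambda\,\df\mu$ in the range where it is relevant. When $\lambda = 1/2$ we moreover have the sharp bound $|\eta_{1/2}(x)| \ll x^{1/2}$, and every $\alpha \le 3$ is admissible.

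When $\lambda > 1/2$ and $\lambda \ne 1$, apply Proposition~\ref{prop:composition} with $\phi_1 = \psi_\lambda$, $\phi_2 = \eta_\lambda$: here $\alpha_1 = 1/\lambda < \alpha_2 \le 2$, $L_1 = 1$, $R_1 = \abs{\log}^{-1+\eps}$, $L_2 = R_2 = 1$, and its hypotheses hold ($L_j, R_j$ are powers of $\abs{\log}$, hence $t^{o(1)}$; $R_1 = O(1)$ for $\eps < 1$; and $t^2 = O(t^{1/\lambda})$ since $\lambda \ge 1/2$). We fall in the case $\alpha_1 < \alpha_2$, so $\phi_\lambda \in \G(1/\lambda, 1, \abs{\log}^{-1+\eps})$ with $c_\ast(\phi_\lambda) = c_\ast(\psi_\lambda)$ and $c_1(\phi_\lambda) = c_1(\psi_\lambda) + c_1(\eta_\lambda) = \int \phi_\lambda\,\df\mu$. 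Unfolding the definition of $\G$: the $t^3$ error is absorbed in $O(t^{1/\lambda}\abs{\log t}^{-1+\eps})$ because $1/\lambda < 3$; the $c_2 t^2$ term is absorbed because $2 > 1/\lambda$ (so $t^{2-1/\lambda}\abs{\log t}^{1-\eps} \to 0$); and $ic_1 t$ is absorbed exactly when $1 > 1/\lambda$, i.e.\ $\lambda > 1$. This yields the stated expansion and coefficients for $\lambda > 1/2$, $\lambda \ne 1$, with $c_1 = \int \phi_\lambda\,\df\mu$ when $\lambda < 1$.

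Finally, for $\lambda = 1/2$: since $\alpha_1 = 1/\lambda = 2$ for $\psi_{1/2}$ and Proposition~\ref{prop:composition} caps $\alpha_2$ at $2$, the only available application of it lies in the case $\alpha_1 = \alpha_2 = 2$, where $R_+$ acquires a summand $\sqrt{L_1} \asymp \abs{\log}^{1/2}$ — weaker than the claimed $\abs{\log}^{\eps}$. This is the delicate point: one must bound the cross term of Lemma~\ref{lem:addition} by hand, exploiting the pointwise bound on $\eta_{1/2}$ rather than only its $L^2$ norm. Writing $\delta_1 = \e^{it\psi_{1/2}} - 1$, $\delta_2 = \e^{it\eta_{1/2}} - 1$ and using $|\e^{iu} - 1| \le \min\{2, |u|\}$ gives $|\delta_1(x)| \le \min\{2, t x^{-1/2}\}$ and $|\delta_2(x)| \ll \min\{1, t x^{1/2}\}$; splitting $\int_0^1 |\delta_1 \delta_2|\,\df\mu$ at $x = t^2$ then gives $\int_0^1 |\delta_1 \delta_2|\,\df\mu \ll t^2$. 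Inserting this into~\eqref{eq:split-delta12} together with $\I[\psi_{1/2}](t) = i c_1(\psi_{1/2}) t + c_\ast t^2 \abs{\log t} + O(t^2 \abs{\log t}^{\eps})$ (the power term, $c_\ast = -1/\log 2$) and $\I[\eta_{1/2}](t) = i c_1(\eta_{1/2}) t + O(t^2)$ (Proposition~\ref{prop:intgamma-taylor}), and absorbing the $O(t^2)$ terms into $O_\eps(t^2 \abs{\log t}^{\eps})$ while keeping $i c_1 t$, yields~\eqref{eq:appl-largemom-1/2} with $c_1 = c_1(\psi_{1/2}) + c_1(\eta_{1/2}) = \int \phi_{1/2}\,\df\mu$. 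Apart from this borderline estimate, the whole argument is routine bookkeeping of which subleading terms fall into the error.
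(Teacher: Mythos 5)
Your proposal is correct and follows essentially the same route as the paper: the same decomposition $\floor{1/x}^\lambda = x^{-\lambda} + O_\lambda(x^{1-\lambda})$, Corollary~\ref{cor:gamma-pow-xlog} for the power term, Proposition~\ref{prop:intgamma-taylor} for the correction, Proposition~\ref{prop:composition} to recombine when $\lambda>1/2$, $\lambda\neq 1$, and a direct bound on the cross term in~\eqref{eq:split-delta12} when $\lambda=1/2$ (where the paper uses the pointwise bound $\abs{\delta_1\delta_2}\ll t^2\abs{p_{1/2}r_{1/2}}\ll t^2$ rather than your splitting at $x=t^2$, but this is the same estimate). Your bookkeeping of the coefficients and absorbed terms matches the paper's conclusions.
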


\begin{proof}
  We write~$\phi_\lambda(x) = p_\lambda(x) + r_\lambda(x)$, where~$p_\lambda(x) = x^{-\lambda}$ and $r_\lambda(x) \ll_\lambda \floor{1/x}^{\lambda-1}$. By Proposition~\ref{prop:intgamma-taylor}, we have~$r_\lambda \in \G(\min(3, \frac1{\lambda-1/3}), 1, 1)$.

  We consider first the case~$\lambda>1/2$,~$\lambda\neq 1$. By Corollary~\ref{cor:gamma-pow-xlog}, we have~$p_\lambda \in \G(\frac1\lambda, 1, \abs{\log}^{-1+\eps})$. We deduce, by Proposition~\ref{prop:composition}, that~$\phi_\lambda\in\G(\frac1\lambda, 1, \abs{\log}^{-1+\eps})$, and this yields the second and third cases.
  
  If~$\lambda=1/2$, then Corollary~\ref{cor:gamma-pow-xlog} implies~$p_{1/2}\in\G(2, \abs{\log },\abs{\log}^{\eps})$, and by Proposition~\ref{prop:intgamma-taylor}, for some~$c\in \R$, we have
  $$ \I[r_{1/2}](t) = i c t + O(t^2) $$
  On the other hand, since~$\abs{(\e^{it p_{1/2}(x)}-1)(\e^{it r_{1/2}(x)}-1)} \ll t^2 \abs{p_{1/2}(x)r_{1/2}(x)} \ll t^2$, we get
  $$ \int_0^1 (\e^{itp_{1/2}(x)}-1)(\e^{itr_{1/2}(x)}-1)\df\mu(x) = O(t^2). $$
  By~\eqref{eq:split-delta12}, we conclude~\eqref{eq:appl-largemom-1/2} as claimed.
\end{proof}

The case~$\lambda=1$ could be analyzed by the same method, but we chose to study it separately to obtain a more precise error term by another approach, using Proposition~\ref{prop:Dirichlet-to-G} directly. The associated Mellin transform~$G_0(s)$ is related to the Riemann~$\zeta$-function.

\begin{corollary}\label{cor:integralapplication-floor1x}
  The function~$\phi$ given by~$\phi(x) = \floor{1/x}$ satisfies
  $$ \I[\phi](t) = -\tfrac{it}{\log 2}(\log t + \gamma_0 - \tfrac{\pi i}2) + O_\eps(t^{2-\eps}). $$
\end{corollary}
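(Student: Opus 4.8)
The plan is to carry out the contour-integration argument in the proof of Proposition~\ref{prop:Dirichlet-to-G}, which simplifies here because the relevant Mellin transform is \emph{meromorphic}, with a single simple pole, so that the Hankel contour collapses to a residue. Since $\phi(x) = \floor{1/x} \geq 1 > 0$ on $(0,1]$, we have $\sgn\phi \equiv 1$ and $J_- = J_0 = 0$ in the notation of that proof, so for any fixed $c \in (0,1)$,
$$ \I[\phi](t) + 1 = \lim_{\theta \to 0^+} \frac{1}{2\pi i}\int_{(-c)} \Gamma(-s)\, G_\eta(s)\, (t\abs{1+i\theta})^s \df s, \qquad \eta = \tfrac2\pi\arctan\theta, $$
where $G_0(s) = \e^{-s\pi i/2}\int_0^1 \floor{1/x}^s \df\mu(x)$. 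Splitting $(0,1]$ into the intervals $(\tfrac1{n+1}, \tfrac1n]$, on which $\floor{1/x} = n$, gives $G_0(s) = \tfrac1{\log 2}\e^{-s\pi i/2} D(s)$ with
$$ D(s) := \sum_{n \geq 1} n^s \log\frac{(n+1)^2}{n(n+2)}, $$
the series converging for $\Re(s) < 1$; this is the Dirichlet series alluded to.

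For the continuation I would write $\log\tfrac{(n+1)^2}{n(n+2)} = b_n - b_{n+1}$ with $b_n = \log(1+\tfrac1n)$, perform one Abel summation, and use $n^s - (n-1)^s = s\int_{n-1}^n u^{s-1}\df u$ to obtain, for $\Re(s) < 1$,
$$ D(s) = \log 2 + s\int_1^\infty \log\Bigl(1 + \tfrac1{\floor u + 1}\Bigr) u^{s-1}\df u = \log 2 - 1 - \frac1{s-1} + s\,E(s), $$
where $E(s) = \int_1^\infty\bigl(\log(1+\tfrac1{\floor u+1}) - \tfrac1u\bigr)u^{s-1}\df u$. The integrand of $E$ is $O(u^{\Re(s)-3})$, so $E$ is holomorphic on $\{\Re(s) < 2\}$ with $E(s) \ll_\sigma 1$ on each line $\Re(s) = \sigma < 2$. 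Hence $D$, and so $G_0$, continues meromorphically to $\{\Re(s) \leq 2-\eps\}$ with only a simple pole at $s = 1$, and $\abs{D(2-\eps+i\tau)} \ll_\eps 1 + \abs\tau$. I then move the contour from $\Re(s) = -c$ to $\Re(s) = 2-\eps$, crossing only the simple pole of $\Gamma(-s)$ at $s = 0$ and the double pole of $\Gamma(-s)G_0(s)$ at $s = 1$; by Stirling's formula and the bound on $D$, the shifted integral is $O_\eps(t^{2-\eps})$ uniformly in $\theta$, and the passage $\theta \to 0$ is handled exactly as in Proposition~\ref{prop:Dirichlet-to-G}. Since $D(0) = \log 2$, the pole at $s = 0$ contributes $G_0(0) = 1$, which cancels the $1$ on the left-hand side.

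It remains to evaluate $-\Res_{s=1}\bigl(\Gamma(-s)G_0(s)t^s\bigr)$ (the minus sign because the contour is moved rightwards). From the formula for $D$ one reads off $D(s) = -\tfrac1{s-1} - 1 + O(s-1)$ near $s = 1$ — equivalently $E(1) = -\log 2$, which follows from the telescoping identity $\sum_{n\geq2}\bigl(\log\tfrac{n+1}n - \log\tfrac n{n-1}\bigr) = -\log 2$. Combining this with $\e^{-s\pi i/2} = -i - \tfrac\pi2(s-1) + O((s-1)^2)$ and $\Gamma(-s) = \tfrac1{s-1} + (\gamma_0 - 1) + O(s-1)$ gives
$$ \Gamma(-s)G_0(s) = \frac1{\log 2}\Bigl(\frac{i}{(s-1)^2} + \frac{i\gamma_0 + \pi/2}{s-1}\Bigr) + O(1) \qquad (s \to 1), $$
and since $t^s = t\bigl(1 + (s-1)\log t + \cdots\bigr)$,
$$ \Res_{s=1}\bigl(\Gamma(-s)G_0(s)t^s\bigr) = \frac{t}{\log 2}\Bigl(i\log t + i\gamma_0 + \tfrac\pi2\Bigr). $$
Therefore $\I[\phi](t) = -\tfrac{t}{\log 2}\bigl(i\log t + i\gamma_0 + \tfrac\pi2\bigr) + O_\eps(t^{2-\eps}) = -\tfrac{it}{\log 2}\bigl(\log t + \gamma_0 - \tfrac{\pi i}2\bigr) + O_\eps(t^{2-\eps})$, which is the claim.

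The main obstacle is the growth control of $G_0$ on the shifted line $\Re(s) = 2-\eps$. There the factor $\e^{-s\pi i/2}$ contributes $\e^{\pi\tau/2}$ for $\tau > 0$, which exactly cancels the $\e^{-\pi\tau/2}$ decay of $\Gamma(-s)$; hence the convergence of the vertical integral, and the uniformity in $\theta$, is borderline and relies on a bound for $D$ that is polynomial of degree below $3/2$ — the estimate $\abs{D} \ll 1 + \abs\tau$ above suffices, while a crude termwise estimate only yields $O(\abs\tau^{2-\eps})$, which is too weak. The integral representation — obtained by subtracting off the $\tfrac1u$ part of $\log(1+\tfrac1{\floor u+1})$ — is precisely what produces the bounded auxiliary function $E(s)$ and hence this sharper bound. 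The remaining ingredients (the Mellin inversion, the regularization limit $\theta \to 0$, and the residue bookkeeping) are routine within the framework of Section~2.
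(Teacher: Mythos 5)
Your argument is correct and follows essentially the same route as the paper: Mellin inversion for $\e^{-y}$ with the $\eta$-regularization, a rightward contour shift to $\Re(s)=2-\eps$, and the observation that $G_0$ is meromorphic near $s=1$ so the residue theorem applies to the double pole there, yielding exactly the residue $\tfrac{t}{\log 2}\bigl(i\log t+i\gamma_0+\tfrac\pi2\bigr)$ and the error $O_\eps(t^{2-\eps})$. The only (harmless) difference is how the continuation and growth of $G_0$ are obtained: you extract the pole by Abel summation, writing $D(s)=\log 2-1-\tfrac1{s-1}+sE(s)$ with $E$ bounded, whereas the paper writes the same Mellin transform as $\tfrac{\e^{-s\pi i(1-\eta)/2}}{\log 2}\{\zeta(2-s)+H(s)\}$ with $H$ bounded and uses standard bounds on $\zeta$ in the critical strip; both give the same polar part (constant term $-1$ at $s=1$) and an integrable bound on the shifted line.
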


\begin{proof}
  The integral~\eqref{eq:gammaG-prebound} converges for all~$c<1$. A quick computation shows that an analytic continuation of~$G_\eta(s)$ is given by
  $$ G_\eta(s) = \frac{\exp(-s \frac {\pi i}2(1 - \eta))}{\log 2}\big\{\zeta(2-s) + H(s)\big\}, $$
  where~$H(s) = \sum_{n\geq 1} n^s(\log(1+\frac1{n(n+2)}) - \frac1{n^2})$ is analytic and uniformly bounded in~$\Re(s)\leq 2-\eps$. We have
  $$ \int_{\Re(s)=2-\eps} \abs{\Gamma(-s)G_\eta(s)}\abs{\df s} \ll_\eps 1 + \int_0^\infty \abs{\zeta(\eps+i\tau)} \frac{\df \tau}{1+\tau^2} \ll_\eps 1 $$
  by the Stirling formula. The polar behaviour~\eqref{eq:polar-bhvr} is given by
  $$ G_0(s) = \frac{\exp(-s\frac {\pi i}2)}{\log 2}\big\{\zeta(2-s) + H(s)\big\} = \frac{\exp(-s\frac {\pi i}2)}{\log 2}\Big\{\frac1{1-s} + A + O(s-1)\Big\} $$
  for~$s$ in a neighborhood of~$1$, where
  \begin{align*}
    A = {}& \sum_{n\geq 1}\Big(n\log\Big(1+\frac1{n(n+2)}\Big) -\log\Big(1+\frac1n\Big)\Big) \\
    = {}& -\lim_{N\to\infty} \sum_{n=1}^N \Big(n\log\Big(1+\frac1{n+1}\Big) - (n-1) \log\Big(1+\frac1n\Big)\Big) \\
    = {}& -1.
  \end{align*}
  Applying Proposition~\ref{prop:Dirichlet-to-G} with~$\delta = 1/2$ and~$\alpha=1$ yields the claimed result up to~$O(t)$. Our more precise statement follows from noting that there is no branch cut along~$s\geq 1$ in this case, so that the residue theorem may be used. We obtain
  $$ \underset{s=1}\Res\ \Gamma(-s)G_0(s)t^s = \tfrac{it}{\log 2}(\gamma_0-\tfrac{\pi i}{2} + \log t), $$
  whence the claimed estimate. One could go further, isolating a pole of order~$2$ at~$s=2$, and this would give an error term~$O(t^2\abs{\log t})$.
\end{proof}

\subsection{Dedekind sums}

The final example we discuss is related to Dedekind sums, for the definition of which we refer to~\cite[Section~2.4]{BettinDrappeau}. The ``period function''~$\phi$ relevant to us here is
$$ \phi(x) = \floor{1/x} - \floor{1/T(x)}. $$
Compared with the case of~$x\mapsto \floor{1/x}$ studied in Corollary~\ref{cor:integralapplication-floor1x}, the relevant exponent~$\alpha$ is again~$1$, but the leading term turns out to be~$t$ (the terms~$t\log t$ vanish).

\begin{corollary}\label{cor:integralapplication-dedekind}
  The map~$\phi$ on~$(0, 1)$ given by~$\phi(x) = \floor{1/x} - \floor{1/T(x)}$ satisfies
  $$ \I[\phi](t) = - \frac{\pi}{\log 2}t + O(t^2\abs{\log t}^2). $$
\end{corollary}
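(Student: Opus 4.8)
The plan is to proceed as in Corollary~\ref{cor:integralapplication-floor1x}, by applying Proposition~\ref{prop:Dirichlet-to-G} directly with $\alpha = 1$, and to show that the polar contribution at $s=1$ produces only a term proportional to $t$, with the $t\log t$ term cancelling. First I would check that the integrability hypothesis~\eqref{eq:gammaG-prebound} holds: since $\phi(x) = \floor{1/x} - \floor{1/T(x)}$ and $\floor{1/x}$ is integrable against $\mu$ against any power $c<1$ (as in Corollary~\ref{cor:integralapplication-floor1x}), and $\floor{1/T(x)}$ has the same distribution under the $T$-invariant measure $\mu$, the function $\abs{\phi}^{\pm c}$ is $\mu$-integrable for small $c>0$. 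The key point is then to compute an analytic continuation of $G_\eta(s) = \int \phi_{s,\eta}\df\mu$. On each interval $x \in (1/(n+1), 1/n]$ one has $\floor{1/x} = n$ and $T(x) = 1/x - n$, so $\floor{1/T(x)} = \floor{1/(1/x-n)}$; summing the resulting blocks as a Dirichlet-type series in $s$ should express $G_\eta(s)$ in terms of $\zeta$-values plus an explicitly convergent correction, exactly parallel to the $H(s)$ term in Corollary~\ref{cor:integralapplication-floor1x}.

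Next I would locate the pole of $G_0(s)$ at $s=1$ and identify the residue $\varrho$ and the constant term. The crucial phenomenon announced in the statement is that the $t\log t$ coefficient vanishes; this should manifest as the \emph{residue} $\varrho$ of $G_0(s)$ at $s=1$ being zero, so that $\xi$ effectively drops and only the constant term of the Laurent expansion survives, contributing a clean multiple of $t$. Concretely, the pole coming from $\floor{1/x}$ is cancelled by the pole coming from $-\floor{1/T(x)}$ (since $T$ preserves $\mu$, the two Mellin transforms have the same principal part at $s=1$), leaving $G_0(s)$ \emph{holomorphic} at $s=1$ with $G_0(1)$ a finite value. One then applies Proposition~\ref{prop:Dirichlet-to-G}, or more precisely the residue-theorem refinement used in Corollary~\ref{cor:integralapplication-floor1x} — since there is no branch cut along $s\ge 1$ once the genuine pole has been removed, the Hankel contour can be replaced by a residue computation. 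The leading contribution is $\Res_{s=1}\Gamma(-s)G_0(s)t^s = -G_0(1)\,t$, and matching this with the advertised answer should give $G_0(1) = \pi/\log 2$. The error term $O(t^2\abs{\log t}^2)$ comes from pushing the contour to $\Re(s) = 2$ and extracting the order-two pole of $\Gamma(-s)$ there against the (now at most simple) pole of $G_0$, exactly as in the final sentence of the proof of Corollary~\ref{cor:integralapplication-floor1x}.

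The main obstacle will be the explicit computation of $G_0(s)$ and, in particular, verifying rigorously that its residue at $s=1$ vanishes and computing the value $G_0(1) = \pi/\log 2$. The subtlety is that the $T$-invariance $\I[\phi\circ T](t) = \I[\phi](t)$ from~\eqref{eq:integral-t-invarT} only directly tells us that $\int \floor{1/T(x)}^s\df\mu$ equals $\int \floor{1/x}^s \df\mu$ for the unweighted phase, whereas $\phi_{s,\eta}$ involves $\abs{\phi}^s$ with $\phi$ a \emph{difference}, not $(\floor{1/x})^s - (\floor{1/T(x)})^s$; so the cancellation of the principal part has to be extracted from the actual structure of the series for $G_\eta(s)$ rather than from a symmetry argument, by carefully grouping the contributions over the dyadic-type intervals $(1/(n+1),1/n]$ and recognizing a telescoping pattern analogous to the computation of the constant $A=-1$ in Corollary~\ref{cor:integralapplication-floor1x}. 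Once the analytic continuation and the value $G_0(1)$ are in hand, the rest is a routine application of the machinery already developed.
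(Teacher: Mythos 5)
Your plan is genuinely different from the paper's proof, and as it stands it has a gap at exactly the crux of the statement. The paper never applies Proposition~\ref{prop:Dirichlet-to-G} to the difference $\phi=\floor{1/x}-\floor{1/T(x)}$ at all: it writes $\phi=\phi_0-\phi_0\circ T$ with $\phi_0(x)=\floor{1/x}$, uses the addition identity~\eqref{eq:split-delta12} together with the $T$-invariance~\eqref{eq:integral-t-invarT} to get $\I[\phi](t)=2\Re \I[\phi_0](t)+\Delta(t)$, bounds the cross term $\Delta(t)=O(t^2\abs{\log t}^2)$ by expanding over the branches of the Gauss map and using $\abs{\e^{iu}-1}\ll\abs{u}^{1-1/\abs{\log t}}$, and then simply plugs in Corollary~\ref{cor:integralapplication-floor1x} (with its refined error $O(t^2\abs{\log t})$). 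In that route the vanishing of the $t\log t$ term is transparent: $-\frac{it}{\log 2}(\log t+\gamma_0)$ is purely imaginary and disappears upon taking $2\Re$, leaving exactly $-\frac{\pi}{\log 2}t$. No new Mellin transform has to be continued or evaluated.

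In your route, by contrast, everything you would need to prove is concentrated in the unverified claims about $G_0(s)$ for the difference $\phi$, and you have not supplied them. Since $\abs{\phi}^s$ is not $(\floor{1/x})^s-(\floor{1/T(x)})^s$ (as you yourself note), the $T$-invariance argument gives no cancellation of principal parts; what is actually required is the analytic continuation of the two-variable series $\sum_{m\neq n}\abs{m-n}^s\e^{\mp s\pi i/2}\,\mu(\{a_1=m,\,a_2=n\})$ built from the joint law of two consecutive partial quotients, the proof that its pole at $s=1$ cancels between the positive and negative parts, and the evaluation $G_0(1)$ giving precisely $\pi/\log 2$ (note the sign of the residue-theorem contribution: the pole at $s=1$ contributes $-G_0(1)t$, as in Corollary~\ref{cor:integralapplication-floor1x}). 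None of this is routine: $\int\abs{\phi}\df\mu=\infty$, so $G_0(1)$ is only defined through the continuation; the cancellation of the pole uses that both tails $\mu(\pm\phi>u)\sim \frac{1}{u\log 2}$ have the \emph{same} constant, and getting the value $\pi/\log 2$ (rather than $\pi/\log 2$ plus an imaginary contribution from the sub-leading terms of the two one-sided Mellin transforms) requires a further identity about the asymmetric joint distribution of $(a_1,a_2)$ that your "telescoping pattern" is only conjectured to produce. You would also still need the uniform-in-$\eta$ continuation and line-integral bounds hypothesised in Proposition~\ref{prop:Dirichlet-to-G} up to $\Re(s)=2$, plus control at $s=2$ (where the second moment diverges) to reach the error $O(t^2\abs{\log t}^2)$. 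So while the Mellin strategy could in principle be carried out, the proposal leaves its central computation—the very cancellation and constant that the corollary asserts—as an acknowledged "obstacle", whereas the paper's two-line reduction to Corollary~\ref{cor:integralapplication-floor1x} disposes of it immediately; I would recommend adopting that decomposition.
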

\begin{proof}
  We consider
  \begin{align*}
    \Delta(t) := {}& \int_0^1 (\e^{-it\floor{1/T(x)}} - 1)(\e^{it\floor{1/x}}-1) \df\mu(x) \\
    = {}& \int_0^1 (\e^{-it\floor{1/x}} - 1)F_x(t)\df x,
  \end{align*}
  with~$F_x(t) = \frac1{\log 2} \sum_{n\geq 1} \frac{\e^{itn}-1}{(n+x)(n+1+x)}$.
  Since~$\abs{\e^{iu}-1} \ll \abs{u}^{1-1/\abs{\log t}}$ for all~$u\in\R$, we find
  $$ F_x(t) \ll t \sum_{n\geq 1} \frac 1{n^{1+1/\abs{\log t}}} \ll t \abs{\log t}. $$
  Similarly,
  $$ \int_0^1 \abs{\e^{-it\floor{1/x}} - 1}\df x \ll t \int_0^1 x^{-1+1/\abs{\log t}}\df x \ll t\abs{\log t}. $$
  We thus obtain~$\Delta(t) = O((t\log t)^2)$. Using Corollary~\ref{cor:integralapplication-floor1x} with the improved error term~$O(t^2 \abs{\log t})$, \eqref{eq:integral-t-invarT} and~\eqref{eq:split-delta12}, we deduce
  $$ \int_0^1 \e^{it(\floor{1/x} - \floor{1/T(x)})} \df\mu(x) = 1 + 2\Re I(t) + O((t\log t)^2), $$
  where~$I(t) = \int_0^1 (\e^{it\floor{1/x}}-1)\df\mu(x)$. Corollary~\ref{cor:integralapplication-floor1x} allows us to conclude.
\end{proof}

\bibliographystyle{../../amsalpha2}
\bibliography{../../bib2}

\end{document}